\documentclass[12pt, oneside]{amsart}
\usepackage{amsmath}
\usepackage{amssymb}
\usepackage{color}
\usepackage[usenames,dvipsnames,x11names,svgnames]{xcolor}
\usepackage[colorlinks=true,linkcolor=NavyBlue,citecolor=DarkGreen, urlcolor=blue]{hyperref}
\usepackage{amsthm}
\usepackage{amscd}
\usepackage{geometry}
\usepackage{mathrsfs}
\usepackage{dsfont}
\usepackage{mathtools}
\usepackage{eurosym}
\usepackage{booktabs}
\usepackage{float}
\usepackage{enumerate}

\newtheorem{thm}{Theorem}

\newtheorem{prop}{Proposition}
\newtheorem{lem}[prop]{Lemma}
\newtheorem{cor}[thm]{Corollary}

\theoremstyle{definition}
\newtheorem{defn}{Definition}
\newtheorem*{rem}{Remark}

\newtheorem*{ack}{Acknowledgments}

\newcommand{\mb}{\mathbb}
\newcommand{\mc}{\mathcal}

\newcommand{\ol}{\overline}

\newcommand{\leqs}{\leqslant }
\newcommand{\geqs}{\geqslant }
\newcommand{\wh}{\widehat}

\newcommand{\eps}{\varepsilon}
\newcommand{\1}{\mathds{1}}

\newcommand{\be}{\begin{equation*}}
\newcommand{\ee}{\end{equation*} }

\newcommand{\ben}{\begin{equation}}
\newcommand{\een}{\end{equation} }

\newcommand{\bs}{\begin{split}}
\newcommand{\es}{\end{split}}

\newcommand{\bmu}{\begin{multline*}}
\newcommand{\emu}{\end{multline*}}

\newcommand{\bmun}{\begin{multline}}
\newcommand{\emun}{\end{multline}}

\newtagform{Alph}[]()
\newtagform{roman}[]()
\newtagform{scroman}[][]

\begin{document}
\title[Siegel Zeros and small gaps]{Siegel Zeros and small gaps between zeros of the Riemann zeta function}
\author[A. Bondarenko]{Andriy Bondarenko}
\address{Department of Mathematical Sciences, Norwegian University of Science and Technology (NTNU), 7491 Trondheim, Norway.}
\email{andriybond@gmail.com} 

\author[W. Heap]{Winston Heap}
\email{winstonheap@gmail.com}

\begin{abstract}
On assuming the Riemann Hypothesis, we show that Siegel zeros imply the existence of gaps between the zeros of the Riemann zeta function less than $1/2$ the normalised length. Specifically, we show that an infinite family of Siegel zeros implies $\liminf_{n\to\infty}(\gamma_{n+1}-\gamma_n)\log(\gamma_n)/2\pi< 0.4733$ on RH. This refutes the existence of certain strong alternative hypotheses under these assumptions. Our arguments incorporate long Dirichlet polynomials of length $T^{17/14-\eps}$ into the Montgomery--Odlyzko method. 
\end{abstract}

\maketitle

\section{Introduction}

The distribution of the non-trivial zeros of the Riemann zeta function has long held interest due to its significant influence on arithmetic. The Riemann Hypothesis (RH) provides the ultimate resolution to the horizontal distribution, with partial results in the form of zero density estimates known. Yet, the vertical distribution is less well understood and seemingly much more nuanced. 

On average, the zero ordinates $\gamma$ are separated by length $\sim 2\pi/\log \gamma$ as $\gamma\to\infty$. Thus, a natural question regarding the vertical distribution is to ask whether smaller or larger gaps exist. In this regard, ordering the vertical zero ordinates by $\gamma_n$, define the quantities 
\[
\mu=\liminf_{n\to\infty}\frac{\gamma_{n+1}-\gamma_n}{2\pi/\log \gamma_n},
\qquad
\lambda=\limsup_{n\to\infty}\frac{\gamma_{n+1}-\gamma_n}{2\pi/\log \gamma_n}.
\] 
A longstanding conjecture states that $\mu=0$ and $\lambda=\infty$. The first non-trivial estimate $\mu<1<\lambda$ was claimed by Selberg whilst the current best results give, on RH, $\lambda>3.18$ due to Bui--Milinovich \cite{BM} and $\mu<0.50895$  due to a recent breakthrough of Inoue \cite{Inoue}. 

In this article, we are interested in small gaps. These have an interesting connection to arithmetic; specifically, Siegel zeros.   

\begin{defn}[Siegel zero] 
A \emph{Siegel zero} is a real number $\beta$ associated to a primitive quadratic Dirichlet character $\chi$ modulo $q$ such that $L(\beta,\chi)=0$ and 
\[
\beta= 1-\frac{1}{\mc{E}\log q}
\]
with $\mc{E}\geqs 3$. The quantity $\mc{E}$ is referred to as the \emph{quality} of $\beta$. We refer to an infinite family $(\beta_j,\chi_j,q_j,\mc{E}_j)$ with $q_j\to\infty$ as an \emph{exceptional sequence}.  
\end{defn}

It has long been known that Siegel zeros imply a very surprising rigidity in the vertical distribution of zeros of the Riemann zeta function. In particular, they are intimately related to gaps of length 1/2. In unpublished work (but see \cite{HBtalk}), Heath-Brown showed that an exceptional sequence would imply many long ranges where almost all normalised zeta zeros are half-integer separated -- this type of distribution has since become known as the Alternative Hypothesis. Later, Conrey--Iwaniec \cite{CI} showed that the existence of a Siegel zero implies that the number of zeta zeros up to height $T$ whose normalised gaps are less than $1/2-\eps$ is $\ll T(\log T)^{4/5}$. That is, almost no zeros are within $<1/2$ of each other provided the existence of a Siegel zero. 


Given this, along with the variety of techniques, gradual nature of progress, and extensive efforts made in achieving the current best bound $\mu<0.50895$, it is natural to suspect that $1/2$ represents a natural barrier due to the potential existence of Siegel zeros. Indeed, this sentiment has been expressed on many occasions \cite{C,GTTB, Inoue}. In this regard, we have the following surprising result.

\begin{thm}
\label{main thm}
Assume RH and that there exists an exceptional sequence of Siegel zeros with $\mc{E}=\inf_j \mc{E}_j$ sufficiently large. Then
\[
\mu<0.4733.
\]
\end{thm}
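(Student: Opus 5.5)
We follow the Montgomery--Odlyzko method. The key input from the Siegel zero is that the Möbius function behaves like $\lambda(n)=\chi(n)\cdot(\text{something close to }1)$; more precisely, $\mu*\mathbf 1$-type sums twisted by $\chi$ become nearly trivial for $n$ in ranges $q^{A}\leqs n\leqs q^{B}$ when $\mc{E}$ is large. The consequence is that the Dirichlet series for $1/\zeta$ is well approximated on such ranges by $L(s,\chi)/\zeta(2s)$-type expressions. Therefore Dirichlet polynomials whose coefficients are built from $\chi$ or $\lambda$ can be evaluated with main terms even when their length exceeds $T$. We choose $T$ with $\log T\asymp\log q$ in a suitable window. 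For a Dirichlet polynomial $A(s)=\sum_{n\leqs T^{\theta}}a(n)n^{-s}$, we compare the two quantities
\[
M_1=\sum_{T<\gamma\leqs 2T}\bigl|A(\tfrac12+i\gamma)\bigr|^2,\qquad
M_2(c)=\sum_{T<\gamma\leqs 2T}\ \sum_{|\gamma'-\gamma|\leqs 2\pi c/\log T}\bigl|A(\tfrac12+i\gamma)\bigr|^2 .
\]
Under RH, if $\mu\geqs c$, then the only zero within normalised distance $<c$ of $\gamma$ is $\gamma$ itself (counting multiplicity), so $M_2(c)\leqs M_1$. To reach a contradiction, it therefore suffices to show $M_2(c)>M_1$ for $c=0.4733$.

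First, we compute $M_1$ and the twisted sums $\sum_\gamma A(\tfrac12+i\gamma)\overline{A(\tfrac12+i\gamma+i\alpha)}$ via Gonchar's/Montgomery's formula: the explicit formula converts the sum over zeros into a continuous mean value plus a sum involving $\Lambda(n)$ times shifted coefficients. Integrating against a smoothing kernel supported in $|\alpha|\leqs 2\pi c/\log T$ expresses $M_2(c)$ via the same ingredients. For $\theta<1$ this is classical. We take $a(n)$ to be a mollifier-like choice, $a(n)=\lambda(n)P(\log(T^\theta/n)/\log T^\theta)$, or a combination of $\chi(n)$ and $\lambda(n)$ with a polynomial weight $P$.

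Second, we extend to $\theta$ up to $17/14-\eps$. Here the off-diagonal terms in the mean values no longer vanish, and we need asymptotics for shifted convolution sums $\sum_{n}a(n)\overline{a(n+h)}$ and $\sum\Lambda(n)a(m)\overline{a(mn)}$-type terms. Using the Siegel zero, we replace $\lambda$ by $\chi$ in the relevant range, so that $a$ becomes essentially the coefficient $\chi*1$-like function $r(n)$ (counting representations by the quadratic form), smoothly weighted. The off-diagonals then become binary additive problems for $r(n)$, namely the divisor-type problem $\sum r(n)r(n+h)$ in arithmetic progressions. We will handle these by Kloosterman/spectral bounds (Deshouillers--Iwaniec-type estimates), and we expect these to yield the level $T^{17/14}$. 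We expect this to be the main obstacle: obtaining power-saving error terms uniformly in $h$ and in the weights up to exponent $17/14$, while controlling the errors from replacing $\lambda$ by $\chi$ in terms of $\mc{E}$. Our first attempt will be to factor $r=\chi*1$ and apply the dispersion method, with the large-sieve bound for the trilinear Kloosterman sums.

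Third, with the main terms in hand, $M_2(c)/M_1$ becomes an explicit ratio of quadratic forms in the polynomial $P$, depending on $\theta=17/14$ and $c$. We optimise over $P$ numerically, for instance with polynomials of moderate degree, by solving a generalised eigenvalue problem. We then verify that the ratio exceeds $1$ at $c=0.4733$, with the error terms being $o(1)$ as $\mc{E}\to\infty$ and $T\to\infty$ along the exceptional sequence. This gives $\mu<0.4733$.
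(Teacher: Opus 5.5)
Your overall framework (Montgomery--Odlyzko with a Dirichlet polynomial longer than $T$, made tractable by the exceptional character) is the right one. However, you use the Siegel zero in the wrong place, and this leaves a genuine gap. You take $a(n)=\lambda(n)P(\cdot)$ and then propose to ``replace $\lambda$ by $\chi$'' in the off-diagonal shifted sums. That cannot work.

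Estimated trivially, the off-diagonal terms exceed the diagonal main term by a factor of roughly $L/T$, a fixed power of $q$, so you need power-saving asymptotics for them. The Siegel zero, by contrast, only controls $\chi(p)$ on average over primes with logarithmic weights (Heath-Brown: $\sum_{p\leqs q^{500},\,\chi(p)=1}\log p/p\ll\log q/\sqrt{\log\mc{E}}$). It says nothing about individual primes or small primes, so $\lambda-\chi$ is not small in any sense that survives multiplication by $L/T$. Also, replacing $\lambda$ by $\chi$ produces $\chi$, not $1*\chi$, so the problem $\sum r(n)r(n+h)$ you set up is not the one that occurs.

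The arrangement that works is the reverse. Take the coefficients to be exactly $\chi(n)G(n)$, so the off-diagonal terms are exact character-sum problems needing no Siegel-zero input. Then use the Siegel zero only in the diagonal prime sum $\sum_p \chi(p)p^{-1}C_G(\log p/\log L)\sin(\pi c\log p/\log T)$. There, replacing $\chi(p)$ by $-1$ costs only a relative $O(1/\sqrt{\log\mc{E}})$, small primes being harmless because of the factor $\sin(\pi c\log p/\log T)$, and it reproduces the M\"obius-type main term.

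With $a=\chi G$, the off-diagonals are of two kinds. The first is $\sum\chi(m)\chi(m+r)$, which completion modulo $q$ handles easily. The second is the shifted-prime correlation $\sum_{k,m,r}\Lambda(k)\chi(m)\chi(km+r)$ with $0<|r|\ll km/T$ and $km\ll L$. This is where all the difficulty lies, and your plan never addresses it. Deshouillers--Iwaniec or dispersion bounds for a divisor-type problem do not deal with $\Lambda(k)$ inside the shifted argument, and nothing in your outline produces the exponent $17/14$; it is only ``expected''.

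The paper gets a power saving over $KM$ by splitting dyadically in $k$:
\begin{itemize}
\item For $K\leqs Tq^{-1/2-\eta}$, it completes the $m$-sum modulo $q$ and uses Gauss/Weil bounds.
\item For $Tq^{-1/2-\eta}<K\leqs Tq^{-\eta}$, it writes $\chi(m)\chi(km+r)=\chi(k+r\bar m)$ and groups by $x\equiv r\bar m \pmod{q}$. It then applies Cauchy--Schwarz with an additive-energy bound for $r\bar m$ and a mean-square bound for $\sum_k b_k\chi(k+x)$ over $x$.
\item For $K\geqs q^{7/3+2\eta}$, it expands $\chi(k+y)$ in multiplicative characters modulo $q$. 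The resulting generalised Jacobi sums are $\ll\sqrt q$, and $\sum_\psi|\sum_k\Lambda(k)\psi(k)V(k/K)|\ll Kq^{\eps}$ follows from the Chen--Gupta--Li density exponent $7/3$.
\end{itemize}
The constraint $R\ll L/T<q^{1/2}$ forces $L=Tq^{1/2-\delta}$. Overlap of the last two ranges forces $T\geqs q^{7/3+\delta}$. This is exactly where $\log L/\log T\to 17/14=1+\tfrac12\cdot\tfrac37$ comes from. Without an argument of this kind, your second step is a genuine gap rather than a deferred computation.
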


We emphasise that this is not in contradiction with the results of Heath-Brown or Conrey--Iwaniec as we are not establishing a quantitative proportionality result; we shall comment on this later. In the immediate, we note that our result has consequences for potential Alternative Hypotheses. Although these came about through supposing the existence of Siegel zeros and were approximate in some sense, they nevertheless do not require Siegel zeros to exist and could do so in potentially stronger forms, with no exceptional sets. Even these stronger forms have proved frustratingly difficult to refute, and so it seems reasonable to note the following.     

\begin{cor}Assume RH and suppose the existence of an exceptional sequence with $\mc{E}=\inf_j \mc{E}_j$ sufficiently large. Then an asymptotic distribution of the form 
\[
\frac{\gamma_{n+1}-\gamma_n}{2\pi/\log \gamma_n}\in \tfrac12 \mb{Z}_{\geqs 1}+o(1)
\] 
cannot exist. That is, in a strong alternative distribution $\tfrac12\mb{Z}+o(1)$, zeros cannot all exist in isolation and there must be some clustering on a normalised $o(1)$ level. 
\end{cor}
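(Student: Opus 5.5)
The corollary should follow directly from Theorem~\ref{main thm}, by contradiction. Assume RH and an exceptional sequence with $\mc{E}$ sufficiently large, and suppose the normalised gaps satisfy
\[
\delta_n:=\frac{\gamma_{n+1}-\gamma_n}{2\pi/\log\gamma_n}\in \tfrac12\mb{Z}_{\geqs 1}+o(1),
\]
that is, $\operatorname{dist}(\delta_n,\tfrac12\mb{Z}_{\geqs 1})\to 0$ as $n\to\infty$. Then for every $\eps>0$ there is $N$ such that for $n\geqs N$ the number $\delta_n$ lies within $\eps$ of some $k/2$ with $k\geqs 1$. In particular $\delta_n\geqs \tfrac12-\eps$ for all $n\geqs N$.

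Taking the $\liminf$ gives $\mu\geqs \tfrac12-\eps$, and letting $\eps\to0$ gives $\mu\geqs \tfrac12$. This contradicts Theorem~\ref{main thm}, which under the same hypotheses gives $\mu<0.4733<\tfrac12$. Hence no such asymptotic distribution exists.

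For the reformulation in terms of a strong alternative distribution $\tfrac12\mb{Z}+o(1)$, where the value $0$ is now allowed: the same reasoning shows that $\delta_n$ cannot be eventually bounded away from $0$. By the theorem, infinitely many $\delta_n<0.4733$. Eventually each $\delta_n$ lies within $o(1)$ of $\tfrac12\mb{Z}_{\geqs 0}$, and since $0.4733<\tfrac12$ these small gaps must satisfy $\delta_n=o(1)$. So infinitely many pairs of consecutive zeros are at normalised distance $o(1)$; the zeros cannot all be isolated, which is the clustering asserted.

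There is no real obstacle here. The only point to check is that "isolation" means every admissible value is at least $\tfrac12$ up to $o(1)$, so that the $o(1)$ error cannot carry a gap below $0.4733$. This holds because $0.4733$ is a fixed constant strictly less than $\tfrac12$.
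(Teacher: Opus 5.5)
Your proposal is correct and is the same deduction the paper intends, since the paper gives no separate proof and treats the corollary as immediate from Theorem~\ref{main thm}. A distribution in $\tfrac12\mathbb{Z}_{\geqslant 1}+o(1)$ forces $\mu\geqslant\tfrac12$, which contradicts $\mu<0.4733$, and your reading of the $\tfrac12\mathbb{Z}+o(1)$ version is also sound: the infinitely many gaps below $0.4733$ must eventually have normalised size $o(1)$.
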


It seems surprising that not only do Siegel zeros not act as a barrier to $\mu<1/2$, but that they can themselves be used to establish this. However, on examining the methods for small gaps it turns out to be quite a natural assumption, as we now describe. We shall use the Montgomery--Odlyzko method \cite{MontgomeryOdlyzko}. It is well known that this has theoretical limitations to establishing $\mu<1/2$ -- in fact even $\mu<0.508$ \cite{IKT} -- however, crucially, such limitations assume that one is using so-called short Dirichlet polynomials. Our main innovation is to dispense with this condition and use long polynomials. 

The method relies on a comparison of the two integrals 
\[
\mc{I}_1=\int_T^{2T} (N(t+h)-N(t))|R(t)|^2dt, \qquad
\mc{I}_0=\int_T^{2T} |R(t)|^2dt
\]
where $N(t)$ denotes the number of zeros up to height $t$, $h=2\pi c/\log T$ and $R$ is some function to be chosen. If it can be shown that $\mc{I}_1>\mc{I}_0$ then there exists at least two zeros in $[T,2T]$, endpoints aside, within a normalised distance $<c$ of each other.  

A natural choice is to take 
\[
R(t)=\sum_{n\leqs L}\frac{\mu(n)}{n^{1/2+it}}
\] since this emulates $1/\zeta(\tfrac12+it)$ and hence becomes large around small gaps. From a more practical viewpoint, the property $\mu(p)=-1$ flips a sign in the prime Dirichlet polynomial approximation to $N(t+h)-N(t)$ exhibiting large \emph{positive} values in $\mc{I}_1$.  

The limitations of the method are computational; one must restrict the length $L\leqs T$ in order to have an approximate orthogonality and retain only diagonal contributions. 
We would like to take a larger $L$, however a serious obstacle here is that in order to handle off-diagonal terms, one must understand additive correlations of the coefficients:
\[
\sum_{n\leqs x}\mu(n)\mu(n+h),
\] 
and, moreover, achieve a power saving. For the M\"obius function this seems far out of reach currently. Nevertheless, we arrive at the following scheme which could hope to establish smaller gaps. 

\vspace{0.3cm}
\noindent $\bullet$ Construct a multiplicative function $f$ with the following properties:
\begin{enumerate}[(i)]
\item $f(p)=-1$ on sufficiently many primes.
\item The correlations 
\[
\sum_{n\leqs x}f(n)f(n+h),\qquad \sum_{km\leqs x}\Lambda(k)f(m)f(km+h)
\]
are computable with power saving error. 
\end{enumerate}

\vspace{0.3cm}

From this it is clear why Siegel zeros can help: choosing $f$ as an exceptional character gives the first condition whilst, in general, Dirichlet characters are very structured so there is hope to understand the correlations. This combination of properties is precisely the mechanism behind Heath-Brown's work on twin primes \cite{HB} and subsequent results \cite{Chinis,GM,TaoTeravainen}. Unlike the twin prime and Chowla conjectures though, we require the additional shifted prime correlations. This is where the majority of our work lies. Here, we introduce some new ingredients building on the Burgess/Vinogradov--type arguments of Kerr \cite{Kerr} which may be of independent interest. See Section \ref{rough sketch sec} for a rough sketch of our ideas.  


We close this introduction with some remarks and discussion on technicalities and possible improvements. Our results show that one need not be wary of Siegel zeros when trying to break the $\mu<1/2$-barrier. Most important is that one can use long Dirichlet polynomials.
Using the recent Chen--Gupta--Li \cite{ChenGuptaLi} zero-density estimate, our methods allow a length $L\approx T^{17/14}$. The Density Hypothesis for Dirichlet $L$-functions would allow  $L\approx T^{5/4}$, although this only results in a modest improvement to around $\mu<0.467$. It is possible that further refinements in the choice of coefficients could offer improvements, as in previous works on small gaps \cite{BMN, CGG, FW, P}, although we suspect that this would only offer modest gains also.

The recent improvement of Inoue \cite{Inoue} utilises a quadratic difference of $N(t)$ and thus in the long polynomial setting would require additional, more complicated, correlations. Since these will imply further restrictions on the length, we have not pursued this method although it is possible that improvements could still exist. 

Finally, 
in order to establish proportionality results, one must input higher moments, or similar such additional information -- e.g.\, see \cite{BGMM,CGGGHB}. These restrict the length much more seriously making bounds of similar quality harder to achieve. Of course, in this setting Siegel zeros become a genuine barrier by Conrey--Iwaniec's result. Nevertheless, it would be of interest to see what can be obtained using Siegel zeros. The current best result regarding a positive proportion of small gaps is due to Chirre--Gon\c{c}alves--de Laat \cite{ChirreGoncalvesdeLaat} at $0.6039$ with refinements due to Bui--Goldston--Milinovich--Montgomery \cite{BGMM}.

\begin{ack}The authors gratefully acknowledge the assistance of OpenAI's ChatGPT, whose  exploratory discussions, calculations and numerics helped stimulate the development of this paper.
\end{ack}

\section{The Montgomery--Odlyzko setup with long polynomials}

\subsection{Weight choices and the small gaps condition}
Our aim is to compute and then compare a weighted integral of 
\[
N_h(t):=N(t+\tfrac{h}{2})-N(t-\tfrac{h}{2})=\sum_{\gamma} \mathds{1}_{|t-\gamma|\leqs h/2}
\] 
where, here and throughout, 
\[
h=\frac{2\pi c}{\log T}
\]
with $c>0$. 


We choose our weights with desirable properties so let us describe these first. Throughout, we use the Fourier convention
\[
 \wh F(\xi)=\int_{\mb{R}}F(t)e^{-2\pi i t\xi}\,dt.
\]
We choose a nonnegative entire weight concentrated around $T$ with a smooth compactly supported Fourier
transform.  Fix $\sigma>0$ and a nonzero real even function
$\varphi\in C_c^\infty((-\sigma,\sigma))$, and let
\[
 \Phi(z)=\int_{\mb{R}}\varphi(\xi)e^{2\pi i z\xi}\,d\xi,
 \]
 and
\begin{equation} \label{eq:entire-weight}
 W_T(z)=\bigg(\frac{z^2+1/4}{T^2}\bigg)^B
 \bigg(\Phi\!\left(\frac zT-1\right)^2
       +\Phi\!\left(\frac zT+1\right)^2\bigg)
\end{equation}
with $B\in\mb{N}$  to be chosen. 
Then $W_T$ is even and entire, $W_T(t)\geqs0$ for $t\in\mb{R}$, and
\begin{equation}
 \mathrm{supp}\,\wh W_T\subseteq
 \left[-\frac{2\sigma}{T},\frac{2\sigma}{T}\right].
 \label{eq:weight-support}
\end{equation}
Moreover
\begin{equation}
 \wh W_T(0)=C_\Phi T+O_{\Phi,B}(T^{-1}),
 \label{eq:weight-mass}
\end{equation}
where
\[
 C_\Phi=\int_{\mb{R}}x^{2B}
 \{\Phi(x-1)^2+\Phi(x+1)^2\}\,dx>0.
\]
The precise choice of $\varphi$ is not so relevant as the constant $C_\Phi$ will appear in both the numerator and denominator of a final ratio and hence cancel. 

For now, let
\[
R(t)=\sum_{n\leqs L}\frac{r(n)}{n^{1/2+it}}
\]
where the coefficients and length are to be determined -- at this stage we assume real coefficients $r(n)\ll n^\eps$ and $L\leqs T^C$. We then consider the following two integrals 
\[
\mc{I}_1=\int_{T^{1-\eps}}^{T^{1+\eps}}N_h(t)|R(t)|^2W_T(t)dt,
\qquad
\mc{I}_0=\int_{T^{1-\eps}}^{T^{1+\eps}}|R(t)|^2W_T(t)dt.
\]

\vspace{0.3cm}
\noindent{$\bullet$ \emph{Small gaps criterion}:} If $\mc{I}_1>\mc{I}_0$ there exists a $t\in[T^{1-\eps},T^{1+\eps}]$ for which $N_h(t)>1$, thus giving a small gap of normalised size $\leqs c(1+\eps)$. 

\vspace{0.3cm}

To compute these integrals, we first extend them to $\mb{R}$. From the rapid decay of $\Phi$ and the trivial bounds $R(t)^2\ll L^{1+\eps}$, $N_h(t)\ll \log (2+|t|)$ the tail $|t|>T^{1+\eps}$ is
\[
\ll T^{\eps (2B-A)}L^{1+\eps'}
\]
for any given $A>0$. Choosing $A$ sufficiently large in terms of $B$, $\eps$ and $L$ then gives $O(T^{-C})$. For the region $|t|<T^{1-\eps}$, the pre-factor of $((t^2+1/4)/T^2)^B$ in $W_T$ gives the necessary decay on choosing $B$ large enough in terms of $\eps$. Then, accounting for the symmetry of the zeros, 
\[
\frac{\mc{I}_1}{\mc{I}_0}=\frac{I_1}{I_0}+O(T^{-C})
\]  
where 
\[
I_1=\int_\mb{R}N_h(t)|R(t)|^2W_T(t)dt,\qquad I_0=\int_\mb{R} |R(t)|^2W_T(t)dt. 
\]
Here we have assumed $I_0\gg 1$ when pulling it out of the denominator; needless to say, we will have this bound.  

\subsection{Application of the explicit formula} We now reduce $I_1$ to a sum involving primes via the explicit formula. Interchanging, we have 
\[
I_1=\sum_\gamma B_{T,h}(\gamma) 
\]
where
\begin{equation}\label{B}
B_{T,h}(u)=\int_\mb{R}\mathds{1}_{|u-t|<h/2}|R(t)|^2W_T(t)dt.
\end{equation}
Then, as a convolution, 
\[
\wh{B_{T,h}}(\xi)=\frac{\sin(\pi h\xi)}{\pi\xi}\sum_{m,n\leqs L}\frac{r(m){r(n)}}{\sqrt{mn}}\wh{W_T}\Big(\xi+\frac{\log(m/n)}{2\pi}\Big)
\]
and for complex $z$ we interpret 
\[
B_{T,h}(z)=\int_{-h/2}^{h/2}R(t+z)\ol{R}(t+z) W_T(t+z)dt
\]
where 
\[
 \ol{R}(z)=\sum_{n\leqs L}\frac{{r(n)}}{n^{1/2-iz}}.
\]
Since $\wh B$ has compact support we may apply the Guinand--Weil explicit formula to give, assuming RH, 
\begin{multline*}
I_1=\frac{1}{2\pi}\int_{\mb{R}}B_{T,h}(u)\bigg(\Re\frac{\Gamma'}{\Gamma}
 \left(\frac14+\frac{i u}{2}\right)-\log\pi\bigg)du 
 +B_{T,h}(i/2)+B_{T,h}(-i/2) \\
 -\frac1\pi\sum_{k\geqs 2}
 \frac{\Lambda(k)}{\sqrt k}\wh{B_{T,h}}\Big(\frac{\log k}{2\pi}\Big).
\end{multline*}

Now, for the polar terms we have 
\[
B_{T,h}(i/2)\ll \int_{-h/2}^{h/2} \left|\frac{(i/2+u)^2+1/4}{T^2}\right|^B |R(i/2+u)\ol{R}(i/2+u)|du
\ll \frac{L^{1+\eps}}{T^{2B}}\ll T^{-C}
\]
on taking $B$ large enough in terms of $L$ and similarly for $B(-i/2)$. For the Gamma factor integral, we use \eqref{B} and interchange integrals. By the decay of $W_T$, we may again restrict the $t$ integral to $T^{1-\eps}\leqs |t|\leqs T^{1+\eps}$ after applying Stirling's formula to estimate the inner integral by $\log(2+|t|)$. Then by Stirling's formula again
\[
\int_{\mb{R}}\mathds{1}_{|u-t|<h/2}\bigg(\Re\frac{\Gamma'}{\Gamma}
 \left(\frac14+\frac{i u}{2}\right)-\log\pi\bigg)du= h\log|t|+o(1).
\]
For $|t|$ in the current range, this is 
\[
= 2\pi c(1+O_1(\eps))+o(1)
\]
where $O_1$ denotes an implicit constant of absolute value $\leqs 1$. 
After applying this we may extend the $t$ integral to $\mb{R}$ at the cost of a negligible error again, giving in total the following. 

\begin{prop}
Assume RH. Then for any fixed $\eps>0$ we have 
\begin{multline*}
I_1=cI_0-\frac{2}{\pi}\sum_{\substack{m,n\leqs L\\k\geqs 2}}\frac{\Lambda(k)g_h(k)r(m)r(n)}{\sqrt{kmn}}\wh W_T\Big(\frac{\log(km/n)}{2\pi}\Big)
+O_1(\eps I_0)+o(I_0)+O_\eps(T^{-C})
\end{multline*}
where
\[
g_h(k)=\frac{\sin( \tfrac12h\log k)}{\log k}.
\]
\end{prop}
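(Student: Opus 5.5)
The proof assembles the explicit formula computation above term by term; the only real work is evaluating $\wh{B_{T,h}}(\log k/2\pi)$ in closed form and tracking signs.

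\textbf{Gamma and polar terms.} The polar terms $B_{T,h}(\pm i/2)$ are already $O(T^{-C})$ for $B$ large. For the Gamma-factor term, write it via \eqref{B} as
\[
\frac{1}{2\pi}\int_{\mb R}|R(t)|^2W_T(t)\Big(\int \mathds{1}_{|u-t|<h/2}\big(\Re\tfrac{\Gamma'}{\Gamma}(\tfrac14+\tfrac{iu}2)-\log\pi\big)du\Big)dt .
\]
The inner integral is bounded by $\ll h\log(2+|t|)$, so the ranges $|t|<T^{1-\eps}$ and $|t|>T^{1+\eps}$ contribute $O(T^{-C})$ by the decay of $W_T$, as in the reduction from $\mc I_j$ to $I_j$. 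On the remaining range the inner integral equals $2\pi c(1+O_1(\eps))+o(1)$. Since $|R|^2W_T\geqs 0$, this gives
\[
\frac{1}{2\pi}\big(2\pi c(1+O_1(\eps))+o(1)\big)\int|R|^2W_T = cI_0+O_1(c\,\eps I_0)+o(I_0).
\]
Re-extending to $\mb R$ costs another $O(T^{-C})$. The factor $c$ in the error can be absorbed by rescaling $\eps$, since $c$ is a fixed constant.

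\textbf{Prime sum.} Substitute the formula for $\wh{B_{T,h}}$ at $\xi=\log k/2\pi$. Then
\[
\frac{\sin(\pi h\xi)}{\pi\xi}=\frac{2\sin(\tfrac12 h\log k)}{\log k}=2g_h(k),
\]
and
\[
\xi+\frac{\log(m/n)}{2\pi}=\frac{\log(km/n)}{2\pi}.
\]
Hence
\[
-\frac1\pi\sum_{k\geqs2}\frac{\Lambda(k)}{\sqrt k}\wh{B_{T,h}}\Big(\frac{\log k}{2\pi}\Big)=-\frac{2}{\pi}\sum_{m,n\leqs L,\,k\geqs 2}\frac{\Lambda(k)g_h(k)r(m)r(n)}{\sqrt{kmn}}\wh W_T\Big(\frac{\log(km/n)}{2\pi}\Big),
\]
which is exactly the stated term.

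\textbf{Sum over $k$.} This sum is effectively finite. By \eqref{eq:weight-support}, $\wh W_T$ vanishes unless $|\log(km/n)|\leqs 4\pi\sigma/T$, so $k\leqs 2L$, say. This justifies the interchange of sum and integral and the application of the Guinand--Weil formula with a compactly supported $\wh B$.

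\textbf{Main obstacle.} The delicate point is the Gamma-factor step. The $O_1(\eps)$ error must be relative to $I_0$ itself, which relies on the positivity of $|R|^2W_T$. The truncations must also be uniform in $L\leqs T^C$, which requires taking $B$ and $A$ large in terms of $\eps$ and $C$. I would also check the Stirling evaluation
\[
\int_{t-h/2}^{t+h/2}\big(\Re\tfrac{\Gamma'}{\Gamma}(\tfrac14+\tfrac{iu}2)-\log\pi\big)du = h\log\tfrac{|t|}{2\pi}+O(h/|t|).
\]
Since $h\log 2\pi=o(1)$, this matches the paper's $h\log|t|+o(1)$.
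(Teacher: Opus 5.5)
Your proposal is correct and takes essentially the same route as the paper. You apply the Guinand--Weil formula to $B_{T,h}$ and discard the polar terms. You evaluate the Gamma-factor term by interchanging integrals and applying Stirling on $T^{1-\eps}\leqs|t|\leqs T^{1+\eps}$, using positivity of $|R|^2W_T$ to get the $O_1(\eps I_0)$ error. Finally you substitute $\xi=\log k/2\pi$ into $\wh{B_{T,h}}$ to produce $2g_h(k)$ and $\wh W_T(\log(km/n)/2\pi)$. Your remarks on the finiteness of the $k$-sum and on absorbing the factor $c$ into $\eps$ are harmless details that the paper leaves implicit.
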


\subsection{Choice of resonator and proof of Theorem \ref{main thm}}
We now fix a choice of resonator, state our parameters, and give the required mean value estimates. We will then be able to conclude Theorem \ref{main thm}.

 Let $q$ be a fundamental discriminant, large, and set
\[
R(t)=\sum_{n\leqs L}\frac{\chi(n)G(n)}{n^{1/2+it}},
\]
where $\chi$ is the primitive quadratic character modulo $q$ and 
\[
G(n)=G_0\Big(\frac{\log n}{\log L}\Big)f(n)
\]
with $G_0$ a polynomial and $f$  a smooth function equal to $1$ on $[0,L/2]$ which decreases smoothly to zero on $[L/2,L]$ whereafter it remains zero. The derivatives of $f$  satisfy $f^{(j)}\ll L^{-j}$ and the unsmooth condition $n\leqs L$ can essentially be removed, although we retain it for emphasis. 

Fix $0<\delta<10^{-2}$ here and throughout and set
\begin{equation}
 T=q^{7/3+\delta},\qquad
 L=Tq^{1/2-\delta}=q^{17/6},\qquad
 \vartheta=\vartheta_\delta=\frac{\log L}{\log T}
 =\frac{17}{14+6\delta}.
 \label{eq:parameters}
\end{equation}
We then have the following mean value asymptotics. 

\begin{thm}
\label{denom thm}
Let 
\[
 I_0=\int_\mb{R} |R(t)|^2W_T(t)dt
\]
with the above choices of $T$, $L$ and $R(t)$. Then
\begin{equation}
 I_0=(1+o(1))D_G\wh W_T(0)\frac{\phi(q)}q\log L
 \label{eq:I0-asymptotic}
\end{equation}
where
\begin{equation}
 D_G=\int_0^1G_0(x)^2\,dx.
 \label{eq:DG}
\end{equation}
\end{thm}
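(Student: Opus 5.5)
Expanding $|R(t)|^2$ and integrating against $W_T$ gives
\[
I_0=\sum_{m,n\leqs L}\frac{\chi(m)\chi(n)G(m)G(n)}{\sqrt{mn}}\,\wh W_T\Big(\frac{\log(m/n)}{2\pi}\Big).
\]
By \eqref{eq:weight-support} only pairs with $|\log(m/n)|\leqs 4\pi\sigma/T$ contribute, that is $m=n+h$ with $|h|\ll \sigma n/T\ll \sigma L/T$. We split into the diagonal $h=0$ and the off-diagonal $h\neq0$.

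\emph{Diagonal.} Since $\chi(n)^2=\1_{(n,q)=1}$, the diagonal equals
\[
\wh W_T(0)\sum_{(n,q)=1}\frac{G(n)^2}{n}.
\]
We evaluate this by partial summation, using
\[
\sum_{n\leqs x,(n,q)=1}\frac1n=\frac{\phi(q)}{q}\big(\log x+O(\log\log q)\big)
\]
for $x\geqs q$; the range $x<q$ is negligible relative to $\log L$. Since $G(n)=G_0(\log n/\log L)f(n)$ and the taper of $f$ lives on $[L/2,L]$, which has logarithmic length $O(1)$, this gives
\[
\sum_{(n,q)=1}\frac{G(n)^2}{n}=\frac{\phi(q)}{q}\log L\int_0^1G_0(x)^2\,dx\,\big(1+o(1)\big),
\]
which is the claimed main term.

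\emph{Off-diagonal.} For fixed $h\neq0$ with $|h|\ll L/T=q^{1/2-\delta}$, we need
\[
S_h=\sum_{n}\frac{\chi(n)\chi(n+h)G(n)G(n+h)}{\sqrt{n(n+h)}}\,\wh W_T\Big(\frac{\log(1+h/n)}{2\pi}\Big).
\]
Only $n\gg T|h|$ contributes. The plan is to split $n$ into residue classes mod $q$. Writing $n=a+qk$, the character factor $\chi(a)\chi(a+h)$ depends only on $a$, and the remaining weight is smooth in $k$ over a range of length about $L/q\gg q$. Poisson summation in $k$ (or simply comparing the sum with an integral, with derivative bounds from $f^{(j)}\ll L^{-j}$, the polynomial $G_0$, and $\wh W_T$ being smooth at scale $1/T$) shows that $S_h$ equals
\[
\frac1q\sum_{a\bmod q}\chi(a)\chi(a+h)
\]
times a smooth integral, plus an error that is very small. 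The weight is smooth at scale $n/T\gg |h|\geqs1$ in $n$, and at scale $\gg L/q$ after rescaling, so the error saving is substantial.

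The complete sum is the Jacobsthal-type sum $\sum_a\chi(a)\chi(a+h)$. For primitive quadratic $\chi$ it equals $-1$ when $(h,q)=1$, and in general it is $\ll (h,q)$ up to divisor factors. Hence each $S_h$ is $\ll (h,q)q^{-1}\cdot\wh W_T(0)\cdot O(1)$: the $n$-sum weighted by $1/n$ over $n\asymp$ (scale) gives $O(\wh W_T(0)/T\cdot T/|h|)$-type factors. Summing over $|h|\ll q^{1/2-\delta}$ gives
\[
\ll q^{-1}\sum_{h\ll q^{1/2}}\frac{(h,q)}{|h|}\,\wh W_T(0)\ll q^{-1+\eps}\,\wh W_T(0)=o\big(\wh W_T(0)\log L\big),
\]
which is an acceptable error.

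\emph{Main obstacle.} The delicate point is making the Poisson/completion step rigorous: the smooth weight in $k$ must vary slowly enough that the nonzero frequencies contribute negligibly, since each nonzero frequency involves Gauss-type sums of size $\sqrt q$. Here the choice $L=Tq^{1/2-\delta}$ is exactly what is needed. The length of the $k$-range is about $n/q$, while the scale of smoothness in $n$ is about $n/T\gg |h|$. What must be checked is that, after the change $n=a+qk$, the weight's derivatives in $k$ are controlled by $q/(n/T)\leqs qT/n$. The nonzero dual frequencies are then negligible provided $n/T\gg q^{1+\eps}$ on the support. If this fails on part of the range, I would fall back there on the trivial bound combined with the restriction $|h|\ll n/T$. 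Alternatively I would use the Pólya--Vinogradov or Burgess bound for the incomplete sum $\sum\chi(n)\chi(n+h)$, which has length $\gg q^{1/2+\delta}$, and this gives a power saving there.
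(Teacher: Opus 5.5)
Your architecture is the paper's: the diagonal via the coprime harmonic sum, and the off-diagonal by completing $\chi(n)\chi(n+h)$ modulo $q$, with the zero frequency governed by $\sum_{a\bmod q}\chi(a)\chi(a+h)\ll(h,q)$. However, two steps fail as written.

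\emph{Diagonal.} The range $n<q$ is not negligible. Since $\log q=\tfrac{6}{17}\log L$, those terms carry a fixed proportion of the main term; dropping them replaces $D_G$ by $\int_{6/17}^1G_0^2$. Your asymptotic for the coprime harmonic sum is only stated for $x\geq q$, so it does not cover them. The paper avoids this by inserting $\sum_{d\mid(n,q)}\mu(d)$ first and then applying Euler--Maclaurin to each $\sum_{m\leq L/d}G(dm)^2/m$. Since every $d\mid q$ satisfies $d\leq q<L$, this gives the full main term with error $\sum_{d\mid q}(1+\log d)/d\ll(\log\log q)^2$.

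\emph{Off-diagonal.} Your smoothness scale is wrong, and under it your main route never applies. You need $n/T\gg q^{1+\varepsilon}$, but $n\leq L=Tq^{1/2-\delta}$ forces $n/T\leq q^{1/2-\delta}$. The scale $n/T$ is the right one when $m$ is held fixed, not $h=m-n$. With $h$ fixed, use $\frac{d^i}{dn^i}\log(1+h/n)\ll|h|n^{-1-i}$ and $\widehat W_T^{(i)}\ll T^{1+i}$. Faà di Bruno then gives
\[
\frac{d^j}{dn^j}\widehat W_T\Big(\frac{\log(1+h/n)}{2\pi}\Big)\ll T\max_{i\leq j}(T|h|/n)^i\,n^{-j}\ll T\,n^{-j}
\]
on the support $n\gg T|h|$. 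Together with $f^{(j)}\ll L^{-j}$, the whole weight is smooth at scale $n$.

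After a smooth dyadic split $n\asymp N$ and writing $n=a+qk$, the $k$-weight has relative derivatives $\ll(q/N)^j$. Since $N\gg T\geq q^{7/3}$, every nonzero Poisson frequency is $O(q^{-A})$. Corrected this way, your route works on the whole range and needs no Weil bound. That is a small simplification over the paper. The paper's completion lemma uses only $\|V\|_{\mathrm{BV}}$, so it must bound the $b\neq0$ terms by $\sqrt{q(r,Q)}$ via Weil. It needs that generality because the lemma is reused in Range I of the numerator, where the $m$-sum can be shorter than $q$.

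As you wrote it, though, everything rests on your fallback. The trivial bound fails: a block $n\asymp N$ contributes $\asymp N$, totalling $\asymp L\gg T$. Completion plus the Weil bound for $\sum_nV(n)\chi(n)\chi(n+h)$ does work; it is precisely the paper's argument and gives $q^{\varepsilon}(L/q+\sqrt q\log L)=o(T)$. Also, $L=Tq^{1/2-\delta}$ is not ``exactly what is needed'' here. The denominator only requires $L/q=o(T)$; the choice of $L$ is dictated by the numerator.

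\emph{A harmless slip.} You have $\int|w_h|\ll T\log q$, not $T/|h|$: each dyadic block of $\sum 1/n$ contributes $O(1)$, and there are $\asymp\log(L/T|h|)$ blocks. So the zero-frequency total is $\ll q^{\varepsilon}\frac{L}{T}\cdot\frac{T\log q}{q}\ll q^{-1/2-\delta+2\varepsilon}T$, rather than $q^{-1+\varepsilon}T$. This is still negligible, so the slip does no damage.
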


\begin{thm}\label{num thm}
Let 
\[
J=\sum_{\substack{m,n\leqs L\\k\geqs 2}}\frac{\Lambda(k)g_h(k)\chi(m)\chi(n)G(m)G(n)}{\sqrt{kmn}}\wh W_T\Big(\frac{\log(km/n)}{2\pi}\Big)
\]
so that $I_1=cI_0-(2/\pi)J+O_1(\eps I_0)+o(I_0)$.

Suppose that $\chi$ is an exceptional character with associated quality $\mc{E}$. Then 
\[
J=-
\Big(1+o(1)+O\Big(\frac{1}{\sqrt{\log\mc{E}}}\Big)\Big)\wh W_T(0)\frac{\phi(q)}q\log L
 \int_0^1 C_G(u)
 \frac{\sin(\pi c\vartheta u)}u\,du.
\]
where 
\[ C_G(u)=\int_0^{1-u}G_0(x)G_0(x+u)\,dx\]
and $\vartheta=\log L/\log T$.
\end{thm}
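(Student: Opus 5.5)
Write $km=n+\ell$. Since $\wh W_T$ is supported in $|\xi|\leqs 2\sigma/T$, only terms with $|\log(km/n)|\ll 1/T$ survive, that is $|\ell|\ll L/T=q^{1/2-\delta}$. The first step is to expand $\wh W_T$ around its value at the given argument and split according to $\ell$.

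\emph{The diagonal $\ell=0$.} Here $n=km$, so the diagonal is
\[
\wh W_T(0)\sum_{k,m}\frac{\Lambda(k)g_h(k)\chi(m)^2\chi(k)G(m)G(km)}{km}.
\]
For an exceptional $\chi$ we have $\chi(p)=-1$ for most primes $p\in[q^{\eps},L]$. Quantitatively, the Siegel zero gives $\sum_{p\leqs x}(1+\chi(p))\log p/p\ll (\log x)/\sqrt{\log\mc{E}}$-type bounds in the range $x\geqs q^{O(1)}$. So I would replace $\Lambda(k)\chi(k)$ by $-\Lambda(k)$, losing only $O(1/\sqrt{\log\mc{E}})$. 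Prime powers and small primes $k\leqs q^{\eps}$ are harmless. Writing $k=L^u$ and $m=L^x$, partial summation turns $\sum_m\chi(m)^2/m$ into $(\phi(q)/q)\log L\,dx$. The factor $g_h(k)\Lambda(k)/k$ becomes $\sin(\pi c\vartheta u)/u\,du$, using $\tfrac12h\log k=\pi c\,u\log L/\log T$. This gives
\[
-\wh W_T(0)\frac{\phi(q)}{q}\log L\int_0^1 C_G(u)\frac{\sin(\pi c\vartheta u)}{u}\,du,
\]
with the constraint $u+x\leqs1$ producing $C_G$.

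\emph{The off-diagonal $1\leqs|\ell|\ll q^{1/2-\delta}$.} Here I must show that
\[
\sum_{\ell\neq0}\ \sum_{km=n+\ell}\frac{\Lambda(k)g_h(k)\chi(m)\chi(n)G(m)G(n)}{\sqrt{kmn}}\,\wh W_T(\cdot)
\]
is $o(\wh W_T(0)\log L)$. Note that $\wh W_T\ll T$ while the $\ell$-range has length $L/T$, so the trivial bound is too large by a power. The main step is to exploit cancellation in $\chi(m)\chi(km+\ell)$, summed over $m$ in dyadic ranges. For fixed $k,\ell$ this is a shifted character correlation. For $k$ coprime to $q$ one has $\sum_{m\bmod q}\chi(m)\chi(km+\ell)=\chi(k)\sum_m\chi(m)\chi(m+\ell\bar k)$, which is $-\chi(k)$ when $q\nmid\ell$. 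Over complete periods this therefore gives $O(M/q)$, times a factor $\chi(k)$, and the remaining incomplete-sum contribution must be handled.

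I would split $m$ into complete residue blocks plus an incomplete tail. On the complete part, the main term $-\chi(k)M/q$ summed over $k$ and $\ell$ is small, because it carries a saving of $1/q$ that beats the length $L/T=q^{1/2-\delta}$. The incomplete parts, where $M<q$ is possible since $m$ can be small while $k$ is large, are the real obstacle. For $m$ short the sum must instead be run over the prime variable $k$ of length $K=L/M\geqs q^{11/6}$, which needs a bound of the form
\[
\sum_{k\sim K}\Lambda(k)\chi(km+\ell)\ll K q^{-\eta}
\]
uniformly for $K$ somewhat beyond $q^{1/2}$. This would come from Vaughan or Heath-Brown identity decomposition into type I/II sums with Burgess-type bounds, following Kerr's Burgess/Vinogradov argument. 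Alternatively, the zero-density input (Chen--Gupta--Li) can control $\psi(x;q,a)$ for the relevant lengths, expanding $\chi(km+\ell)$ into characters in $k$. This zero-density route is exactly where the exponent $17/14$, i.e.\ $T=q^{7/3+\delta}$, comes from.

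\emph{Balancing the ranges.} In the intermediate range where both $m$ and $k$ are around $q^{1/2+}$, I would apply Cauchy--Schwarz over $\ell$ together with Burgess in $m$. The parameters are chosen so that every range gives a power saving in $q$, giving $o(\log L)$ overall. I expect this shifted-prime correlation estimate, uniform in $\ell$ up to $q^{1/2-\delta}$, to be the main difficulty. Finally, the error from Taylor-expanding $\wh W_T$ involves derivatives, which are $\ll T^{j+1}$, times $(\ell/n)^j$, and is absorbed in the same way.
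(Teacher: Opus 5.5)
Your diagonal treatment matches the paper's: Heath-Brown's lemma replaces $\chi(p)$ by $-1$, prime powers are negligible, and partial summation produces $C_G$. Your completion in $m$ is essentially the paper's Range I. The gap is in the off-diagonal.

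\textbf{Why your off-diagonal plan falls short.} After dyadic decomposition and separation of variables one needs $\sum_k a_k\Lambda(k)\sum_{m,r}b_mc_r\chi(m)\chi(km+r)\ll KMq^{-\eta}$. Here $r\sim R\ll KM/T$, and $R$ can be as large as $L/T=q^{1/2-\delta}$.

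\emph{The needed saving is of size $R$, not $q^{\eta}$.} A pointwise bound $\sum_{k\sim K}\Lambda(k)\chi(km+\ell)\ll Kq^{-\eta}$, as you propose, gives only $KMRq^{-\eta}$, because you lose the full length of the $\ell$-sum. Pointwise you would need a saving of size about $Rq^{\eta}$, i.e.\ nearly square-root cancellation modulo $q$. Vaughan/Heath-Brown identities with Burgess-type input (Kerr) give only a tiny power saving; the paper notes that route reaches only $L\approx T^{1.02}$. The same objection applies to ``Burgess in $m$'' for each fixed $\ell$.

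\emph{Which ranges of $K$ are actually covered.} Expanding into characters and using zero density, together with the square-root size of the Jacobi coefficients $T_\psi(r\bar m)$, does give a $q^{1/2-\eps}$ saving. But it does so only for $K\geqs q^{7/3+2\eta}$. The terms $\psi=\chi,\psi_0$, whose prime sums really are of size $K$ because of the Siegel zero, are harmless only because their Jacobi coefficients are $O(1)$. Completion in $m$ costs $\sqrt q$ per pair $(k,r)$ and works when $M\gg Rq^{1/2+\eta}$. For the largest $R\asymp KM/T$ this means $K\leqs Tq^{-1/2-\eta}$, whatever $M$ is, so $M<q$ is not itself the obstruction. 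Hence the window $Tq^{-1/2-\eta}<K<q^{7/3+2\eta}$ is left uncovered.

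\emph{Your proposed intermediate range is empty.} You suggest a range ``where both $m$ and $k$ are around $q^{1/2+}$''. But an off-diagonal term needs $|\ell|\geqs1$ and $|\ell|/n\ll 1/T$, which forces $KM\gg T=q^{7/3+\delta}$.

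\textbf{The missing idea.} The paper's Range II is a bilinear, large-sieve-type estimate that isolates the long variable $k$ and completes it modulo $q$.
\begin{itemize}
\item For $(m,q)=1$ write $\chi(m)\chi(km+r)=\chi(k+r\bar m)$, group the pairs $(m,r)$ by $x\equiv r\bar m\bmod q$, and apply Cauchy--Schwarz over $x$.
\item The $(m,r)$-side is an additive divisor count: $\sum_x|\sum_{r\bar m\equiv x}b_mc_r|^2\ll q^\eps(RM+(RM)^2/q)$.
\item The $k$-side is a mean square over all shifts: $\sum_{x\bmod q}|\sum_k b_k\chi(k+x)|^2\ll q^\eps(q+K)\sum_k|b_k|^2$. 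This follows from $|\sum_{x\bmod q}\chi(x+k)\chi(x+\ell)|\ll(k-\ell,q)$.
\item For $K\gg q$ this gives $E/KM\ll q^\eps(\sqrt{R/M}+R/\sqrt q)$. That is a power saving exactly when $K\leqs Tq^{-\eta}$ (since $R/M\ll K/T$) and $R<q^{1/2}$.
\end{itemize}
Requiring this range to overlap the zero-density range $K\geqs q^{7/3+2\eta}$ forces $T>q^{7/3}$. The condition $R\ll L/T<q^{1/2}$ forces $L=Tq^{1/2-\delta}$. Together these give $\vartheta=17/14$; the zero-density estimate alone does not.

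\textbf{Two lesser points.}
\begin{itemize}
\item Exceptional moduli are fundamental discriminants, so $\sum_{m\bmod q}\chi(m)\chi(m+a)$ is not $-1$. It is a product of local factors of size about $(a,Q)$, which must be averaged over $\ell$.
\item Taylor-expanding $\wh W_T$ does not usefully separate variables, since $T\ell/n\asymp 1$ makes all terms comparable. The paper uses Mellin inversion in $k,m,r$ instead.
\end{itemize}
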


\begin{proof}[Proof of Theorem \ref{main thm}]
Applying these two asymptotics gives
\begin{equation}\label{ratio}
\frac{\mc{I}_1}{\mc{I}_0}= c+ \frac{2}{\pi D_G}\int_0^1 C_G(u)
 \frac{\sin(\pi c\vartheta u)}u\,du+O(\eps)+O\Big(\frac{1}{\sqrt{\log\mc{E}}}\Big)+o(1).
\end{equation}
We now fix
\[
 \delta=10^{-6},\qquad
 \vartheta=\frac{17}{14+6\delta},
 \qquad
 c=0.473275,
\]
and choose
\[
 G_0(x)=1-\frac{63}{125}\left(x-\frac12\right)^2
\]
which is positive on $[0,1]$.  A computer calculation then shows that the right hand side of \eqref{ratio} is $>1$ for $\eps$ sufficiently small and $\mc{E}$ sufficiently large.
\end{proof}


\section{Auxiliary results}

During the proof, we shall make use of two additional results. The first is a classical zero density estimate. Let $N(\sigma,H,\psi)$ denote the number of zeros of $L(s,\psi)$ in the region $\{\rho=\beta+i\gamma: \beta>\sigma, |\gamma|\leqs H\}$. Then we have the following.

\begin{prop}
\label{prop:density}
For $1/2\leqs\sigma<1$, $H\geqs1$, and every $\eps>0$,
\begin{equation}
 \sideset{}{^*}\sum_{\psi\bmod q}N(\sigma,H,\psi)
 \ll_{\eps}
 (qH)^{7(1-\sigma)/3+\eps}
 \label{eq:density}
\end{equation}
where the sum is restricted to primitive characters modulo $q$.
\end{prop}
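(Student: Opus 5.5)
The plan is to recognise Proposition~\ref{prop:density} as an instance of the large-values method of Montgomery--Huxley, run for the family of primitive characters modulo $q$ with heights up to $H$. The relevant space has ``conductor volume'' $qH$, and the exponent $7/3$ comes from Ingham-type bounds such as $12/5$ or $7/3$. Strictly speaking, this is a classical result: Huxley's hybrid exponent $12/5$ is classical, and the exponent $7/3$ appears to be the Chen--Gupta--Li hybrid estimate cited in the introduction. In the paper one would simply quote it. If a proof is required, I would structure it as follows.

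\begin{enumerate}[(i)]
\item \emph{Zero detection.} Fix a mollifier $M_X(s,\psi)=\sum_{n\leqs X}\mu(n)\psi(n)n^{-s}$. At a zero $\rho$ with $\beta>\sigma$, use the identity $L M_X = 1+\sum_{n>X}a(n)\psi(n)n^{-s}$ and smooth with $e^{-n/Y}$. Shifting contours then shows that either a Dirichlet polynomial $\sum_{X<n\leqs Y^{1+\eps}} b(n)\psi(n)n^{-\rho}$ is $\gg 1$, or an integral of $L(\tfrac12+it,\psi)M_X$ near $\gamma$ is $\gg Y^{\sigma-1/2}$.

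\item \emph{Dyadic reduction.} Split the detecting polynomial dyadically into pieces of length $N$. After selecting a well-spaced subset of zeros, it suffices to count the points $(\psi,t)$ at which a Dirichlet polynomial of length $N$ has size $\geqs V \approx N^{\sigma}$.
\end{enumerate}

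For the count itself, I would combine two large-value bounds. The hybrid mean value theorem gives $R\ll (qH+N)N^{2-2\sigma}$. Huxley's large values estimate, proved via Hal\'asz--Montgomery duality together with the hybrid fourth-moment bound $\sum_\psi\int_{-H}^{H}|L(\tfrac12+it,\psi)|^4\,dt\ll (qH)^{1+\eps}$, gives
\[
R\ll N^{2-2\sigma}+qH\,N^{4-6\sigma}.
\]
Taking $X=Y=(qH)^{\theta}$ and optimising over $N$ yields the exponent $12/5$.

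Getting down to $7/3$ needs the new large-value input of Guth--Maynard type, as extended to the hybrid setting by Chen--Gupta--Li. This is the main obstacle, and it is not reproducible in a short argument. The remaining steps are routine:

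\begin{enumerate}[(i)]\setcounter{enumi}{2}
\item \emph{Case split in $\sigma$.} For $\sigma$ close to $1$, use the Huxley bound. For intermediate $\sigma$, use the Guth--Maynard-type bound. For $\sigma\leqs 1/2+\eps$, use the trivial count $(qH)^{1+\eps}$, which is consistent with the stated right-hand side since $7(1-\sigma)/3\geqs 7/6-7\eps/3$ there, so $(qH)^{1+\eps}$ is dominated for $\eps$ small.

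\item \emph{Summing up.} Sum over the $O(\log qH)$ dyadic ranges and absorb the resulting logarithms into $(qH)^{\eps}$.
\end{enumerate}

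In the paper I would therefore present the statement as a citation of Chen--Gupta--Li, with Huxley's $12/5$ version as a provable fallback. The weaker exponent would only shrink the admissible length $L$.
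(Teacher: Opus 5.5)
Your proposal matches the paper: it gives no proof of Proposition~\ref{prop:density} and simply quotes it from Chen--Gupta--Li, whose Guth--Maynard-type large-values input is exactly the step you flag as not reproducible in a short argument. One small correction to your background sketch: Huxley's large-values estimate $R\ll N^{2-2\sigma}+qH\,N^{4-6\sigma}$ does not come from the fourth moment. It comes from Hal\'asz--Montgomery, a pointwise convexity bound for $L(it,\psi\bar\psi')$, and Huxley's subdivision of the $t$-range; the hybrid fourth moment is used instead for the Ingham-type bound that covers $\sigma\leqs 3/4$.
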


This is due to Chen--Gupta--Li \cite{ChenGuptaLi} which utilises the machinery of Guth--Maynard \cite{GM}. The exponent $7/3$ improves on Huxley's classical exponent of $12/5$ \cite{HuxleyLVDP}.

Our second input is a result of Heath-Brown \cite{HB} (Lemma 3) which concisely captures the fact that $\chi(p)=-1$ almost always in the presence of a Siegel zero. 

\begin{prop}
\label{Heath-Brown prop}
Let $\beta=1-(\mc{E}\log q)^{-1}$ be a real zero of $L(s,\chi)$, with
$\mc{E}\geqs 3$.  Then
\begin{equation}
 \sum_{\substack{p\leqs q^{500}\\ \chi(p)=1}}
 \frac{\log p}{p}
 \ll \frac{\log q}{\sqrt{\log\mc{E}}}
 \label{eq:bad-primes}
\end{equation}
where the implied constant is absolute.
\end{prop}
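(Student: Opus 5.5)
This is Lemma 3 of \cite{HB}; below is the route I would take to reprove it, though I have not pinned down the step that produces the $1/\sqrt{\log\mc{E}}$ saving. I would work with the nonnegative multiplicative function $r=1*\chi$, the coefficients of $\zeta(s)L(s,\chi)$. It satisfies $r(p)=2$ when $\chi(p)=1$, $r(p)=0$ when $\chi(p)=-1$, and $r(m^2)\geqs 1$. The set-up is to study
\[
S(y)=\sum_{n}\frac{r(n)}{n^{\beta}}\,e^{-n/y},\qquad
S_1(y)=\sum_{n}\frac{r(n)\log n}{n^{\beta}}\,e^{-n/y},
\]
with $y$ up to $q^{A}$ for a large absolute constant $A$. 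I would evaluate both by Mellin inversion against $\Gamma(w)y^{w}$ and shift to $\Re w=-\beta+1/2-\eps$ (or further). The pole of $\zeta(\beta+w)$ at $w=1-\beta$ gives the main term $L(1,\chi)\Gamma(1-\beta)y^{1-\beta}$. The pole of $\Gamma$ at $w=0$ contributes $\zeta(\beta)L(\beta,\chi)=0$ to $S$, and to $S_1$ it contributes $-\zeta(\beta)L'(\beta,\chi)\ll\log^2q$. Since $\zeta L$ has degree $2$ and $L(\tfrac12+it,\chi)\ll (q(1+|t|))^{1/4}$ by convexity, the shifted integral is $O(q^{1/4+\eps}y^{-1/2+\eps})$, which is negligible for $y\geqs q^{1/2+2\eps}$.

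The key inequality comes from $\log n=\sum_{dm=n}\Lambda(d)$, which gives $r(n)\log n=\sum_{dm=n}r(m)\,(\Lambda\cdot r)(d)$ with $(\Lambda\cdot r)(p)=2\log p$ when $\chi(p)=1$. All terms are nonnegative, so I would drop everything except $d=p\leqs q^{500}$ with $\chi(p)=1$ and $m\leqs y$. Choosing $y$ so that $e^{-pm/y}\gg e^{-m/y}$ in that range, this yields
\[
2\Big(\sum_{\substack{p\leqs q^{500}\\ \chi(p)=1}}\frac{\log p}{p^{\beta}}\Big)\,S(y')\ll S_1(Y)
\]
for suitable $y'\leqs Y$, both of size $q^{O(1)}$. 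Because $p^{1-\beta}\leqs e^{500/\mc{E}}\ll1$ for $p\leqs q^{500}$, $p^{-\beta}$ and $p^{-1}$ are comparable. The proposition therefore reduces to bounding the ratio $S_1(Y)/S(y')$ by $\log q/\sqrt{\log \mc{E}}$.

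Inserting the asymptotics above, $y^{1-\beta}=\exp(O(A/\mc{E}))$. The size of the main term is governed by $L(1,\chi)/(1-\beta)=L'(\xi,\chi)\ll\log^2 q$, together with the factor $\Gamma(1-\beta)\approx\mc{E}\log q$. The problem is that, used naively, the ratio $S_1(Y)/S(y')$ comes out as $\log Y\asymp\log q$, which loses exactly the factor we want. The main obstacle is extracting the extra $\sqrt{\log\mc{E}}$. My first attempt would be to iterate the convolution inequality in the style of a sieve. Write $P$ for the prime sum. Splitting $n$ by its number of prime factors $p$ with $\chi(p)=1$, nonnegativity gives roughly $S\gtrsim\sum_j (2P/\log q)^j/j!$, i.e.\ $S$ is at least exponential in $P/\log q$. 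Against this, the explicit evaluation gives $S\ll (\mc{E}\log q)\cdot L(1,\chi)$, with $L(1,\chi)\gg \mc{E}^{-1}(\log q)^{-1}$ up to logarithmic losses. Balancing the two sides, and running the same argument over a range of scales $y=q^{j}$, $1\leqs j\leqs A$, should force $P\ll \log q\,(\log\mc{E})^{-1/2}$, with the square root coming from a Cauchy--Schwarz or variance-type step across the scales. If this balancing cannot be made to work, I would simply cite \cite{HB}, since the result is stated there in exactly this form.
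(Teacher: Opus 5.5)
\paragraph{Verdict.} The paper gives no proof of this proposition; it is quoted as Lemma~3 of \cite{HB}. Your fallback of citing Heath-Brown therefore matches what the authors do. As a self-contained argument, however, your sketch has a genuine gap at its only nontrivial step.

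\paragraph{The residue at $w=0$ is wrong.} The residue of $S_1$ at $w=0$ is $-\zeta(\beta)L'(\beta,\chi)$. Since $\zeta(\beta)=-\mathcal{E}\log q+O(1)$, this has size about $\mathcal{E}\log q\cdot L'(\beta,\chi)$, not $O(\log^2 q)$. It cancels to leading order against the double-pole term at $w=1-\beta$, namely $L(1,\chi)\Gamma(1-\beta)Y^{1-\beta}\bigl(\log Y+\frac{\Gamma'}{\Gamma}(1-\beta)\bigr)\approx -L(1,\chi)(1-\beta)^{-2}$. Expanding $L(1,\chi)=(1-\beta)L'(\beta,\chi)+\tfrac12(1-\beta)^2L''(\beta,\chi)+\cdots$, what is left is governed by $L''(\beta,\chi)$.

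In more concrete terms: beyond $q^{C}$ the mass of $r(n)n^{-\beta}$ grows only like $L(1,\chi)\log n$. Hence, for $y',Y$ fixed powers of $q$ beyond $q^{C}$, $S_1(Y)/S(y')$ equals, up to $O(\log q/\mathcal{E})$, the mean $\langle\log n\rangle$ of $\log n$ with respect to the weights $r(n)n^{-\beta}$ on $n\leqs q^{C}$. So your reduction is really the assertion $\langle\log n\rangle\ll\log q/\sqrt{\log\mathcal{E}}$, i.e.\ that this mass concentrates on $\log n=o(\log q)$. Positivity alone gives only $\langle\log n\rangle\leqs C\log q$, which is your ``naive'' ratio. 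Improving it is the whole content of the lemma, and nothing in the proposal does so.

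\paragraph{The balancing step cannot work.} Since $P\leqs\sum_{p\leqs q^{500}}\log p/p=500\log q+O(1)$, your lower bound $\sum_j(2P/\log q)^j/j!$ is $O(1)$ for every admissible $P$. The upper bound $S\ll\mathcal{E}\log q\cdot L(1,\chi)\asymp L(1,\chi)/(1-\beta)$ is only known to be $\ll\log^2 q$. The lower bound $L(1,\chi)\gg(\mathcal{E}\log q)^{-1}$ points the wrong way and merely says $S\gg 1$. The two sides never meet, at one scale or at many, and the Cauchy--Schwarz step across scales has no content as stated.

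\paragraph{What the Siegel zero does control, and what is still missing.} It controls how slowly the mass grows. For $x\geqs q^{C}$ one has $\sum_{n\leqs x}r(n)n^{-\beta}=\frac{L(1,\chi)}{1-\beta}x^{1-\beta}+(\text{small})$. So the mass on $(q^{C},q^{C+500}]$ is only $O(1/\mathcal{E})$ times the mass on $[1,q^{C}]$. Write $n=pm$ with $m\leqs q^{C}<p\leqs q^{500}$; this representation is unique ($p$ is the largest prime factor) and $r(pm)=2r(m)$. This gives $\sum_{q^{C}<p\leqs q^{500},\,\chi(p)=1}1/p\ll 1/\mathcal{E}$, far more than needed for those primes.

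For $p\leqs q^{C}$ this comparison gives nothing, because beyond $q^{C}$ the mass has flat density in $\log n$. These small primes may a priori contribute $\asymp\log q$ to $P$. Controlling them is exactly where an additional idea is required to produce the $1/\sqrt{\log\mathcal{E}}$ saving, and your proposal does not supply one. As it stands, it is a citation of \cite{HB} rather than a proof.
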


\section{The denominator $I_0$: Proof of Theorem \ref{denom thm}}

In this section we consider the mean square
\[
I_0=\int_\mb{R}|R(t)|^2W_T(t)dt
\]
where 
\[
R(t)=\sum_{n\leqs L}\frac{\chi(n)G(n)}{n^{1/2+it}}.
\]
Since $q$ is a fundamental discriminant it takes the form
\begin{equation}
 q=2^\nu Q,\qquad \nu\in\{0,2,3\},\qquad Q\text{ odd and square-free}.
 \label{eq:fundamental-q}
\end{equation}
Let us also recall our parameter selections
\[
 T=q^{7/3+\delta},\qquad
 L=Tq^{1/2-\delta}=q^{17/6}.
\]
These exact choices are not crucial for this section and could be taken with greater flexibility -- they are essentially determined by our argument for the numerator, although we state them here for clarity. 

Since the length of the resonator is greater than the length of integration, in order to compute the off-diagonal terms we must understand the correlations $\sum\chi(m)\chi(m+r)$. The weights in the off-diagonal sums will be of variation at most $q^\eps$; so for smooth $V$ we define
\[
\|V\|_{\mathrm{BV}}=\|V\|_\infty+\int_\mb{R}|V'(x)|dx.
\] 
We then have the following general result.

\begin{lem}
\label{lem:completion}
Let $k,r$ be nonzero integers and let $V$ be a smooth weight supported on a dyadic
interval $m\asymp M$ with $\|V\|_{\mathrm{BV}}\ll q^{\eps}$.  Then
\begin{equation}
 \sum_mV(m)\chi(m)\chi(km+r)
 \ll_{\eps}q^{\eps}
 \left\{\frac{M(r,Q)}q+\sqrt{q(r,Q)}\right\}.
 \label{eq:completion}
\end{equation}
\end{lem}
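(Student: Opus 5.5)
The plan is to use partial summation together with completion of the sum modulo $q$. Write $F(x)=\sum_{m\leqs x}\chi(m)\chi(km+r)$, summing over $m$ in the dyadic range. Integrating by parts gives
\[
\sum_m V(m)\chi(m)\chi(km+r)\ll \|V\|_{\mathrm{BV}}\max_{x\asymp M}|F(x)|.
\]
Since $\|V\|_{\mathrm{BV}}\ll q^\eps$, it suffices to prove the bound for sharp sums over intervals of length $\ll M$.

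The summand $m\mapsto \chi(m)\chi(km+r)$ is periodic modulo $q$. Break the interval into $\lfloor M/q\rfloor$ complete periods plus one incomplete interval of length $<q$.

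\textbf{Complete periods.} For these we need the complete sum $S=\sum_{m\bmod q}\chi(m)\chi(km+r)$. Since $\chi$ is a real primitive character of conductor $q$, it factors as a product of local characters $\chi=\prod_{p^a\| q}\chi_{p}$, with $\chi_p$ the Legendre symbol for odd $p$ and one of the characters mod $4$ or $8$ at $p=2$. By the Chinese remainder theorem, $S=\prod_p S_p$, where
\[
S_p=\sum_{m \bmod p^a}\chi_p(m)\chi_p(km+r).
\]

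For odd $p\mid Q$ there are three cases:
\begin{itemize}
\item If $p\nmid k$ and $p\nmid r$, the classical evaluation of $\sum_m\left(\frac{m(km+r)}{p}\right)$ (quadratic polynomial with nonzero discriminant $r^2$) gives $S_p=-\left(\frac{k}{p}\right)$, so $|S_p|=1$.
\item If $p\nmid k$ and $p\mid r$, then $S_p=\left(\frac{k}{p}\right)(p-1)$, so $|S_p|\leqs p$.
\item If $p\mid k$, then $S_p=\left(\frac{r}{p}\right)\sum_m \left(\frac{m}{p}\right)=0$.
\end{itemize}
The $2$-part contributes $O(1)$. Hence $|S|\ll (r,Q)$, and the complete periods contribute $\ll (M/q)(r,Q)$. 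This is the first term of the bound.

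\textbf{Incomplete interval.} For the remaining sum over an interval $I$ of length $<q$, use Fourier completion:
\[
\sum_{m\in I}\chi(m)\chi(km+r)=\frac1q\sum_{a\bmod q}\Big(\sum_{x\bmod q}\chi(x)\chi(kx+r)e(ax/q)\Big)\Big(\sum_{m\in I}e(-am/q)\Big).
\]
The inner exponential sums over $I$ have total size $\sum_a\min(|I|,\|a/q\|^{-1})\ll q\log q$. It therefore suffices to show that each twisted complete sum is $\ll q^{1/2+\eps}(r,Q)^{1/2}$.

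These sums factor again by the Chinese remainder theorem into local sums $\sum_{x\bmod p}\left(\frac{x(kx+r)}{p}\right)e(a_px/p)$. Weil's bound for mixed character sums, with a rational function that is not a square when $p\nmid r$, gives $\leqs 2\sqrt p$ for these. When $p\mid r$ (and $p\nmid k$), the summand becomes $\left(\frac{k}{p}\right)\1_{p\nmid x}\,e(a_px/p)$, whose sum is trivially $\leqs p$. When $p\mid k$, the local sum is a Gauss sum of size $\sqrt p$.

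Multiplying the local factors gives $\prod_{p\mid Q}\ll 2^{\omega(q)}\sqrt{q}\,\prod_{p\mid (r,Q)}\sqrt p\ll q^{1/2+\eps}(r,Q)^{1/2}$. Combining with the $\log q$ from the exponential sums yields $\ll q^{\eps}\sqrt{q(r,Q)}$ for the incomplete piece, which is the second term.

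The main obstacle I expect is the careful local bookkeeping. This means handling the cases $p\mid k$, $p\mid r$, and $p\mid(k,r)$ in the twisted sums, and treating the prime $2$ where $\chi_2$ has conductor $4$ or $8$. In particular I need to ensure the non-square condition needed for Weil at each prime not dividing $r$. The $2$-adic factor, being of bounded modulus, can simply be bounded trivially by $O(1)$.
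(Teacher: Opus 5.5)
Your proposal is correct and is essentially the paper's argument. Both complete modulo $q$ with additive characters, factor the complete sums by the Chinese Remainder Theorem, use the same local evaluations ($-\chi_p(k)$, $(p-1)\chi_p(k)$ or $0$) for the untwisted sum, and use Weil/Gauss-sum bounds of size $O(\sqrt p)$ (or trivially $p$ when $p\mid r$) for the twisted ones. The only difference is the order of the steps: you apply partial summation first and peel off full periods (your analogue of the paper's $b=0$ term) before completing the short leftover interval, whereas the paper completes the smoothly weighted sum directly and applies partial summation to $\sum_n V(n)e(bn/q)$.
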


\begin{proof}
We first complete the sum by writing it as 
\[ 
\sum_{a\bmod q}\chi(a)\chi(ak+r)\sum_{n\equiv a\bmod q}V(n)
\]
and applying orthogonality of additive characters. This gives 
\begin{equation}\label{completion}
\frac{S_q(k,r;0)}{q}\sum_n V(n)+\frac{1}{q}\sum_{b=1}^{q-1}S_q(k,r;b)\sum_n V(n)e(bn/q)
\end{equation}
where
\[
 S_q(k,r;b)=\sum_{a\bmod q}
 \chi_q(a)\chi_q(ak+r)e(-ab/q).
\]
Here, the subscript $q$ in $\chi$ denotes its modulus -- to be used for emphasis shortly.
In the first term of \eqref{completion} we have $\sum V=\int V+O(q^\eps)\ll q^\eps M$ whilst, by partial summation, the second term is 
\[
\ll \|V\|_{\mathrm{BV}}\sum_{b=1}^{q-1} \frac{|S_q(k,r;b)|}{q\|b/q\|}
\]
since $\sum_{n\leqs t} e(bn/q)\ll 1/\|b/q\|$ where $\|\cdot\|$ denotes the distance to the nearest integer. Thus, it remains to bound the complete sum $S_q(k,r;b)$ pointwise in $b$.

Writing $q=2^\nu p_1\cdots p_j$, by the Chinese Remainder Theorem we have the factorisations $\chi_q=\chi_{2^\nu}\chi_{p_1}\cdots \chi_{p_j}$ and 
\[
S_q(k,r;b)=S_{2^\nu}(k,r;b_\nu)\prod_{i=1}^jS_{p_i}(k,r;b_i)
\]
 for certain integers $b_i$ (all equal to zero if $b=0$).
When $b=0$, at an odd prime $p\mid q$, if $p|k$ the local factor is zero so we may assume otherwise. If $p\mid r$ it is $\chi_p(k)(p-1)$
 since $\chi_p$ is quadratic. When $p\nmid r$, we map $a\mapsto ar$ giving the sum 
\[
\sum_{a\bmod p}\chi_p(a)\chi_p(ak+1)=\sum_{\substack{a=1}}^{p-1}\chi_p(k+\bar{a})
=\sum_{\substack{a=1}}^{p-1}\chi_p(a+k)=-\chi_p(k)
\]
after adding in the term $a=p$ to complete the sum. Here, $\bar a$ is the multiplicative inverse of $a$ modulo $p$.  
  
  For nonzero $b_j$, a Gauss
sum or the Weil bound for a quadratic polynomial gives $O(\sqrt p)$, except
in the degenerate case already accounted for by a common divisor.  
We therefore have 
\begin{equation} \label{eq:complete-hybrid}
 |S_q(k,r;0)|\ll q^{\eps}(r,Q),
 \qquad
 |S_q(k,r;b)|\ll q^{1/2+\eps}(r,b,Q)^{1/2}\qquad(b\ne0).
\end{equation}
Summing over $b$ and using $(r,b,Q)\leqs (r,Q)$ and $\sum_{b\leqs q-1}(q\|b/q\|)^{-1}\ll \log q$ gives the result.
\end{proof}

\begin{proof}[Proof of Theorem \ref{denom thm}]
Expanding the square gives
\begin{equation}
 I_0=\sum_{m,n\leqs L}\frac{\chi(m)\chi(n)G(m)G(n)}{\sqrt{mn}}
 \wh W_T\!\left(\frac{\log(n/m)}{2\pi}\right).
 \label{eq:I0-expand}
\end{equation}
The diagonal is
\[
 \wh W_T(0)\sum_{\substack{n\leqs L\\(n,q)=1}}
 \frac{G(n)^2}{n}.
\]
Inserting $\1_{(n,q)=1}=\sum_{d\mid(n,q)}\mu(d)$ and writing $n=dm$, this sum is
\[
\sum_{d|q}\frac{\mu(d)}{d}\sum_{m\leqs L/d} \frac1mG_0\!\left(\frac{\log dm}{\log L}\right)^2 f(dm)^2. 
\]
Replacing $f(dm)^2$ by 1 in the inner sum introduces a logarithmic sum over $L/2d<m\leqs L/d$ which is $O(1)$. Then by Euler--Maclaurin summation, the inner sum is 
\[
\int_1^{L/d}G_0\!\left(\frac{\log dx}{\log L}\right)^2\frac{dx}{x}+O(1)
\]
which, after some basic substitutions and estimates, is $D_G\log L+O(\log d)$. Then since $\sum_{d|q}\mu(d)/d=\phi(q)/q$ we acquire the main term in \eqref{eq:I0-asymptotic}.
The total error is bounded by
\[
 \sum_{d\mid q}\frac{1+\log d}{d}
 \ll \prod_{p\mid q}\bigg(1+\frac1p\bigg)
 \bigg(1+\sum_{p\mid q}\frac{\log p}{p+1}\bigg)
 \ll(\log\log q)^2
\]
which is negligible compared to $(\phi(q)/q)\log L$.

For the off-diagonals, put $n=m+r$ and divide $m$ dyadically via a smooth partition of unity $\sum_M \omega(m/M)=1$ with $M\ll L$. We then acquire $\ll \log T$ sums of the form 
\[
 \frac{T}{M}\sum_{r\neq 0}\sum_{m\asymp M}\chi(m)\chi(m+r)V(m)
\]
with 
\[
V(m)=\frac{G(m)G(m+r)}{M^{-1}\sqrt{m(m+r)}}\frac{1}{T}\wh W_T\!\left(\frac{\log(1+r/m)}{2\pi}\right)\omega(m/M)
\]
Note that $\|V\|_{\mathrm{BV}}\ll 1$ since $\wh W_T\ll_\Phi T$. Also, from the support
conditions of $\wh W$ we may restrict 
$|r|\ll_\Phi M/T$. Then, Lemma~\ref{lem:completion} gives, for one block,
\begin{align*}
 \frac{T}{M}\sum_{0<|r|\ll M/T}
 \left|\sum_{m\asymp M}\chi(m)\chi(m+r)V(m)\right|
 &\ll q^\eps\left(\frac Mq+\sqrt q\right)
\end{align*}
after using 
\begin{equation}
 \sum_{0<|r|\leqs R}(r,Q)^\alpha\ll_{\eps}Rq^\eps
 \qquad(\alpha=1/2\text{ or }1),
 \label{eq:gcd-average}
\end{equation}
which follows by expanding over divisors of $Q$ and applying the divisor bound $d(q)\ll q^\eps$.
After summing the dyadic blocks, the off-diagonal is
\begin{equation*}
 \ll q^\eps\left(\frac Lq+\sqrt q\log(2L)\right)=o(T)
 \label{eq:I0-off}
\end{equation*}
on recalling that $L/q=q^{11/6}$ and $T=q^{7/3+\delta}$.
  This completes the proof.
\end{proof}

\section{The numerator $J$: diagonal terms}

Our goal in this section and the following is to understand
\[
J=\sum_{\substack{m,n\leqs L\\k\geqs 2}} \frac{\Lambda(k)g_h(k)\chi(m)\chi(n)G(m)G(n)}{\sqrt{kmn}}
\wh W_T\!\left(\frac{\log(n/km)}{2\pi}\right).
\]
We write 
\[
J=
\mc{D}+\mc{OD}
\]
where $\mc{D}$ is the sum with terms $km=n$ and $\mc{OD}$ is the remaining off-diagonal sum. We then have the following. 

\begin{prop}
\label{prop:diagonal}
Assume that $\chi$ is an exceptional character of quality $\mc{E}$. Then 
\[
\mc{D}=-
\Big(1+o(1)+O\Big(\frac{1}{\sqrt{\log\mc{E}}}\Big)\Big)\wh W_T(0)\frac{\phi(q)}q\log L
 \int_0^1 C_G(u)
 \frac{\sin(\pi c\vartheta u)}u\,du.
\]
where 
\[ C_G(u)=\int_0^{1-u}G_0(x)G_0(x+u)\,dx\]
and $\vartheta=\log L/\log T$.
\end{prop}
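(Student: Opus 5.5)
The plan is to evaluate the diagonal directly. On $km=n$ the weight is $\wh W_T(0)$, and $\chi(m)\chi(n)=\chi(m)^2\chi(k)=\chi(k)\1_{(m,q)=1}$. So
\[
\mc{D}=\wh W_T(0)\sum_{\substack{km\leqs L,\ k\geqs2\\(m,q)=1}}\frac{\Lambda(k)g_h(k)\chi(k)G(m)G(km)}{km}.
\]

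\textbf{Step 1: reduce to primes.} Prime powers $k=p^j$ with $j\geqs2$ contribute $O(\wh W_T(0)\log L\cdot h)$, because $g_h(k)\ll h$ and $\sum_{j\geqs2}\Lambda(p^j)/p^j\ll1$. Since $h\asymp 1/\log T$, this is $o(\wh W_T(0)(\phi(q)/q)\log L)$; we use $\phi(q)/q\gg1/\log\log q$. Primes $p\mid q$ have $\chi(p)=0$ and drop out.

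\textbf{Step 2: replace $\chi(p)$ by $-1$.} Write $\chi(p)=-1+(1+\chi(p))$. The correction is supported on primes with $\chi(p)=1$, all of which satisfy $p\leqs L\leqs q^{500}$. Using $|g_h(p)|\leqs \tfrac h2$ and the trivial bound $\sum_m|G(m)G(pm)|/m\ll\log L$, the correction is
\[
\ll \wh W_T(0)\, h\log L\sum_{\substack{p\leqs q^{500}\\ \chi(p)=1}}\frac{\log p}{p}\ll \wh W_T(0)\,\frac{\log L}{\sqrt{\log\mc{E}}},
\]
by Proposition~\ref{Heath-Brown prop} and $h\log q\ll1$. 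This is where I expect the main subtlety: the error must be relative to $(\phi(q)/q)\log L$, and the crude bound loses a factor $q/\phi(q)$. I would first handle this by keeping the coprimality condition in the $m$-sum; the same divisor expansion used for $I_0$ gives $\sum_{(m,q)=1}|G(m)G(pm)|/m\ll(\phi(q)/q)\log L+(\log\log q)^2$. The restriction $p\nmid q$ is harmless.

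\textbf{Step 3: the main term.} It remains to evaluate
\[
-\wh W_T(0)\sum_{p}\frac{\log p\,g_h(p)}{p}\sum_{\substack{m\leqs L/p\\(m,q)=1}}\frac{G(m)G(pm)}{m}.
\]
Treat the inner sum exactly as in the proof of Theorem~\ref{denom thm}: insert $\sum_{d\mid(m,q)}\mu(d)$ and apply Euler--Maclaurin. With $u=\log p/\log L$, this gives
\[
\frac{\phi(q)}q\log L\int_0^{1-u}G_0(x)G_0(x+u)\,dx+O((\log\log q)^2)=\frac{\phi(q)}{q}\log L\,C_G(u)+O((\log\log q)^2).
\]
The cutoff $f$ costs $O(1)$.

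\textbf{Step 4: the prime sum.} Next, $g_h(p)=\sin(\tfrac12 h\log p)/\log p$ and $\tfrac12h\log p=\pi c\vartheta u$. By the prime number theorem, in the form $\sum_p \frac{\log p}{p}F(\log p/\log L)=\log L\int_0^1F(u)\,du+o(\log L)$ for continuous $F$, we get
\[
\sum_p\frac{\log p}{p}\cdot\frac{\sin(\pi c\vartheta u)}{u\log L}\,C_G(u)=\int_0^1C_G(u)\frac{\sin(\pi c\vartheta u)}{u}\,du+o(1).
\]
The integrand is bounded near $u=0$. Multiplying by $-\wh W_T(0)(\phi(q)/q)\log L$ gives the stated asymptotic. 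The $O((\log\log q)^2)$ errors, summed against $\sum_p \log p\,|g_h(p)|/p\ll1$, are negligible.
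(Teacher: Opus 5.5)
Your proposal is correct and follows the paper's argument essentially step for step. You discard prime powers using $|g_h(k)|\leqs h/2$, evaluate the inner $m$-sum by the divisor expansion and Euler--Maclaurin as in the proof of Theorem~\ref{denom thm}, turn the prime sum into the integral via the prime number theorem, and use Proposition~\ref{Heath-Brown prop} to replace $\chi(p)$ by $-1$. The only difference is the order of steps. The paper evaluates the inner sum first, so the factor $\phi(q)/q$ is already present when the bad primes are removed. You replace $\chi(p)$ first, and so you need the coprimality-aware bound $\sum_{(m,q)=1}|G(m)G(pm)|/m\ll(\phi(q)/q)\log L+(\log\log q)^2$, which you correctly supply.
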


\begin{prop}\label{off diag prop}
We have 
\[
\mc{OD}=o\Big(T\frac{\phi(q)}{q}\log T\Big).
\]
\end{prop}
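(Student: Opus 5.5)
The plan is to bound $\mc{OD}$ by writing $n=km+r$ with $r\neq0$. The support condition \eqref{eq:weight-support} forces $|\log(n/km)|\ll 1/T$, that is $|r|\ll km/T$. Since $n\leqs L$, we have $km\ll L$, so the sum runs over triples $(k,m,r)$ with $km\ll L$ and $0<|r|\ll L/T=q^{1/2-\delta}$. This is the first point where the choice of parameters matters: the shifts are shorter than $\sqrt q$.

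We decompose $k$ and $m$ dyadically, $k\asymp K$ and $m\asymp M$, using smooth partitions of unity. The weight $g_h(k)\ll h$ is smooth in $k$, and $\wh W_T\ll T$. Each block is then a sum of the shape
\[
\frac{hT}{\sqrt{K}M}\sum_{0<|r|\ll KM/T}\Big|\sum_{k\asymp K}\Lambda(k)\sum_{m\asymp M}\chi(m)\chi(km+r)V_{k,r}(m)\Big|,
\]
where the $V_{k,r}$ have bounded variation $\ll q^{\eps}$. The target is $o(T(\phi(q)/q)\log T)$, which amounts to saving a small power of $q$ in each of the $O(\log^2T)$ blocks. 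I treat three ranges.

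\emph{Range 1: $K$ small, say $K\leqs q^{\eta}$.} Apply Lemma~\ref{lem:completion} for each fixed $k$. This bounds the inner $m$-sum by $q^\eps(M(r,Q)/q+\sqrt{q(r,Q)})$. Summing over $r$ with \eqref{eq:gcd-average}, the block is $\ll q^\eps hT K^{1/2}M^{-1}(KM/T)(M/q+\sqrt q)$. Using $KM\ll L$, this is $\ll q^{\eps}K^{1/2}(L/q+\sqrt q\,K)$, which is $o(T)$ since $L/q=q^{11/6}$ and $T=q^{7/3+\delta}$. The same argument works for $M\geqs q^{1/2+\eta}$ with any $K$ for which $K^{3/2}\sqrt q\ll T^{1-\eta}$.

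\emph{Range 2: the main obstacle.} This is $K$ large and $M$ small, roughly $M\leqs q^{1/2}$ and $K\approx L/M$ up to $q^{17/6}$. Here completion in $m$ is useless, and we need cancellation in the prime variable. I would try two approaches. The first is to swap roles: fix $m$ and $r$, and write $\chi(km+r)=\chi(m)\chi(k+r\bar m)$ when $(m,q)=1$. This reduces the problem to $\sum_{k\asymp K}\Lambda(k)\chi(k+a)$, a sum of a character over shifted primes, with $K$ far beyond $q$. For this I would use the explicit formula for primes in progressions. Expand $\chi(k+a)$ into additive characters, $\chi(k+a)=\tau(\chi)^{-1}\sum_b\chi(b)e(b(k+a)/q)$, and then expand $e(bk/q)$ over multiplicative characters $\psi$ modulo $q$ via Gauss sums. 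This reduces matters to $\sum_\psi|\sum_{k\asymp K}\Lambda(k)\psi(k)|$, smoothed. The explicit formula writes each such sum as a sum over zeros of $L(s,\psi)$ with $|\gamma|\ll q^{\eps}$ from the smoothing. Proposition~\ref{prop:density} bounds the contribution of zeros with $\beta>\sigma$ by $K^{\sigma}q^{7(1-\sigma)/3+\eps}$. This gives a power saving as long as $K\geqs q^{7/3+\eta}$, and this is precisely why $T=q^{7/3+\delta}$. The exceptional zero $\beta$ of $L(s,\chi)$ itself must be handled separately. When $\psi=\chi$, the resulting term is $K^\beta/\beta$, and it pairs into a $\chi\cdot\chi$ sum of Gauss sums that should vanish or be small, since $\chi(k)\chi(k+a)$ has no main term for $a\not\equiv0$.

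The second approach, a fallback for intermediate $K$ (roughly $q^{\eta}<K<q^{7/3}$ with $M\approx L/K\geqs q^{1/2}$), is a Burgess/Vinogradov-type bilinear argument in the style of Kerr \cite{Kerr}. Decompose $\Lambda$ by Vaughan's identity. The Type II pieces are handled by Cauchy--Schwarz in $m$ and completion, and the Type I pieces by Lemma~\ref{lem:completion}, with the long $m$-variable giving the saving.

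Finally, I sum over $r\ll q^{1/2-\delta}$ and over the dyadic ranges, checking that every block saves $q^{-\eta}$ against $T$. The delicate point I expect is the transition region $K\approx q^{7/3}$, $M\approx q^{1/2}$. There the density-estimate saving and the completion saving both barely succeed, and this is what dictates $L=Tq^{1/2-\delta}$.
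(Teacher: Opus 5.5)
Your small-$K$ and large-$K$ regimes agree with the paper. Completion in $m$ via Lemma~\ref{lem:completion} is its Range I. Expanding $\chi(k+r\bar m)$ over multiplicative characters and feeding $\sum_\psi|\sum_k\Lambda(k)\psi(k)V(k/K)|$ into the Chen--Gupta--Li estimate is its Range III. The paper packages your Gauss-sum expansion as the generalised Jacobi sums $T_\psi$. It disposes of $\psi\in\{\psi_0,\chi\}$, and hence of the Siegel zero, by bounding the $k$-sum trivially, since then $|T_\psi(r\bar m)|\ll(r,q)$.

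The genuine gap is the intermediate range, roughly $Tq^{-1/2-\eta}<K<q^{7/3+\eta}$. Your case split tacitly takes $KM\approx L$, but $M$ runs over all of $[T/K,L/K]$. So for $K$ just below $q^{7/3}$ you can have $M$ as small as $q^{\delta}$. There the density argument (which needs $K\geqs q^{7/3+\eta}$) and your fallback (which leans on a long $m$-variable) both fail to apply.

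More fundamentally, the fallback is only a hope. The trivial bound for a block is $hTR$, so you must save more than the full factor $R\asymp KM/T$, which can be as large as $q^{1/2-\delta}$. The paper notes that Kerr-type Burgess/Vinogradov inputs, even with an optimised hyperbolic splitting, only reach $L\approx T^{1.02}$.

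Your normalisation also needs fixing. Since $n\asymp km$, you have $\sqrt{kmn}\asymp KM$, so the prefactor is $hT/(KM)$, not $hT/(\sqrt K M)$. With your prefactor even the density regime would not give $o(T)$: it yields $hT\sqrt K\,R\,q^{-1/2}$. With the correct one, completion alone covers every $K\leqs Tq^{-1/2-\eta}$.

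The missing idea is to get the saving from averaging over the shifts, treating $\Lambda(k)$ as arbitrary coefficients, with no Vaughan decomposition.
First separate variables in the weights by Mellin inversion, so that the coefficient of $\chi(k+r\bar m)$ factors as $a_k\Lambda(k)\cdot b_mc_r$. Then group $(m,r)$ by $x\equiv r\bar m\pmod q$ and apply Cauchy--Schwarz over $x\bmod q$. Two inputs finish the job. One is the energy bound $\sum_x|W_x|^2\ll q^\eps(RM+R^2M^2/q)$ for $W_x=\sum_{r\bar m\equiv x}b_mc_r$, which is an additive divisor count. The other is the complete second moment $\sum_{x\bmod q}|\sum_kb_k\chi(k+x)|^2\ll q^\eps(q+K)\sum_k|b_k|^2$. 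Together they give
\[
\frac{E(K,M,R)}{KM}\ll q^{\eps}\Big(\sqrt{R/M}+\frac{R}{\sqrt q}\Big)\qquad(K\gg q).
\]
This is a power saving for $K\leqs Tq^{-\eta}$ because $R/M\ll K/T$. It overlaps the density range exactly because $T=q^{7/3+\delta}$. That overlap, together with $R\ll L/T<q^{1/2}$, is what fixes $T$ and $L$, not completion meeting the density estimate as you suggest.
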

Combining these two propositions gives Theorem \ref{num thm}. 
As usual, the diagonal terms are relatively simple to deal with so we consider these first. 
We prove Proposition \ref{off diag prop} in the next section.

\begin{proof}[Proof of Proposition \ref{prop:diagonal}]
Taking $km=n$ we have 
\begin{align*}
 \mc{D}
 =
 &\wh W_T(0)
 \sum_{km\leqs L}
 \frac{\Lambda(k)\chi(k)\chi(m)^2 G(m)
 G(km)}{km}
 \frac{\sin(\pi c\log k/\log T)}{\log k}.
 \label{eq:prime-diagonal}
\end{align*}

The contribution from prime powers is negligible: the sum over $m$ is $\ll \log T$ and using $|\sin x|\leqs |x|$ along with the bound $\sum_{p, a\geqs 2}\log p/p^a=O(1)$ gives $O(T)$ in total since $\wh W_T(0)\asymp T$.  
Then after restricting to $k=p$, for the inner sum, arguing as before we find
\begin{equation}
 \sum_{\substack{m\leqs L/p\\(m,q)=1}}
 \frac{G(m)
 G(pm)}{m}
 =C_G\big(\tfrac{\log p}{\log L}\big)\frac{\phi(q)}q\log L
 +O((\log\log q)^2).
 \label{eq:inner-m-diagonal}
\end{equation}

Now, the prime number theorem and partial summation give
\begin{equation}
 \sum_{p\leqs L}\frac1p
 C_G\big(\tfrac{\log p}{\log L}\big)
 \sin\big(\tfrac{\pi c\log p}{\log T}\big)
\sim
 \int_0^1C_G(u)\frac{\sin(\pi c\vartheta u)}u\,du
 \label{eq:PNT-logscale}
\end{equation}
whilst the accumulated error from \eqref{eq:inner-m-diagonal} is $ O(T(\log\log q)^2)$, again since $\wh W_T(0)\asymp T$ and using $|\sin x|\leqs |x|$ to bound the outer sum over primes by $O(1)$. This error is negligible compared with $T(\phi(q)/q)\log L$.
It then remains to justify replacing $\chi(p)$ by $-1$.  By Proposition \ref{Heath-Brown prop}  we have 
\begin{equation*}
 \sum_{\substack{p\leqs q^{500}\\ \chi(p)\ne-1}}
 \frac{\log p}{p}
 \ll \frac{\log q}{\sqrt{\log\mc{E}}}+\log\log q.
\end{equation*}
and hence
\begin{equation*}
 \sum_{\substack{p\leqs L\\\chi(p)\ne-1}}
 \frac1p\left|\sin\!\left(\frac{\pi c\log p}{\log T}\right)\right|
 \ll\frac{\log q}{\log T\sqrt{\log\mc{E}}}+o(1)=O\Big(\frac{1}{\sqrt{\log \mc{E}}}\Big).
 \label{eq:bad-prime-error}
\end{equation*}
Here $L=q^{17/6}<q^{500}$.  This proves
Proposition \ref{prop:diagonal}.
\end{proof}

\section{The numerator: off-diagonal terms -- proof of Proposition \ref{off diag prop}}

\subsection{Sketch of argument}\label{rough sketch sec}
We first give a rough sketch of our arguments. Ignoring the smooth weights $g$ and $G$, we are required to understand the correlation sums
\[
\sum_{\substack{km\leqs x\\n-km=r}}\Lambda(k)\chi(m)\chi(n).
\]
If we can demonstrate a power saving error $O(x^\theta)$ we can take the length $L$ of the resonator larger than $T$. Indeed, after some work, the error for the correlation problem translates, more or less, directly into an error $O(L^\theta)$ for the full off-diagonals, allowing $L=T^{1/\theta-\eps}$. 

Now, when computing these correlations we first distinguish which of $k,m$ is smaller, lest we have an outer sum of intractable length $x$. This gives inner sums of two different forms:
\[
\sum_{k<m\leqs x/k}\chi(m)\chi(km+r)
\]
and, when $m$ is the smaller of the two,
\[
\sum_{m<k\leqs x/m}\Lambda(k)\chi(km+r). 
\]
The first sum can be handled with Lemma \ref{lem:completion} giving satisfactory bounds. Accounting for the outer sum of length $\sqrt{x}$ then gives a power saving error for a fairly wide range of $q$. 

The second sum is a $GL(1)$ shifted prime problem and hence should be tractable with known techniques (since the $GL(2)$ version, a.k.a. the Titchmarsh divisor problem, is well studied). Indeed, this problem has been considered by several authors beginning with Vinogradov and most recently Kerr \cite{Kerr} (see references therein for intermediate works). For composite $q$, Kerr's result gives power savings in certain ranges of $q,k$, but in our setting becomes inefficient when $k$ is too large in terms of $q$. One can reduce this inefficiency by considering a more general hyperbolic decomposition $\mathds{1}_{m\leqs k^\phi}+\mathds{1}_{m>k^\phi}$, say, 
 but this still falls just short. Such inputs allow $L\approx T^{1.02}$ whereas to beat $\mu<1/2$ with a M\"obius type resonator one needs $L\approx T^{1.08}$.  

For larger $k$ we add a new component which, rather than using Burgess-type character sum moment bounds as in previous works, begins by separating the variables in $km+r$ via Fourier inversion. This introduces sums over all characters modulo $q$, however this doesn't entail much loss when $k$ is large since one can efficiently handle the resulting sums 
\[
\sum_{\psi\bmod q}\bigg|\sum_{k\asymp K} \Lambda(k)\psi(k)\bigg|
\]   
via zero density estimates. These give the strong bound $\ll q^\eps K$ when $K\gg q^{7/3+\eps}$ (using the Chen--Gupta--Li density estimate). The sums over $m$ acquired from the Fourier inversion step are very structured and involve generalised Jacobi sums which typically have squareroot cancellation. 

In total then, given these three ingredients our arguments split into three regimes with the following allied methods: 
\begin{itemize}
\item Small $k$ -- bounds for correlation sums $\chi(m)\chi(km+r)$.
\item Intermediate $k$ -- second moment estimate for shifted character sums.
\item Large $k$ -- separation of variables in $\chi(km+r)$, zero-density estimates for the sum in $k$ and Jacobi sum formulae for the sum in $m,r$.
\end{itemize}
  
\subsection{Initial reductions} The exact off-diagonals are given by 
\[
\mc{OD}
=
\sum_{\substack{m,n\leqs L, r\in\mb{Z}\backslash\{0\},\, k\ll L:\\n-km=r}} \frac{\Lambda(k)g_h(k)\chi(m)\chi(n)G(m)G(n)}{\sqrt{kmn}}
\wh W_T\!\left(\frac{\log(n/km)}{2\pi}\right).
\]

We treat the case of $r>0$, the argument for $r<0$ being similar.
We first apply a dyadic partition of unity of the form $\sum_{X}\omega(x/X)^2=1$ in each variable $k,m,r$ where $\omega$ is a smooth function supported on $[1,2]$ and where $X$ varies over a dyadic sequence, $2^j$ say, with upper bound $\ll L$. This gives a sum of $(\log T)^{O(1)}$ terms of the shape
\begin{equation}\label{OD term}
\frac{T}{KM}\sum_{k,m, r}\Lambda(k)\chi(m)\chi(km+r)F(k,m,r)
\end{equation}
where 
\begin{multline*}
F(k,m,r)
=
\omega(k/K)^2\omega(m/M)^2\omega(r/R)^2\frac{g_h(k)G(m)G(km+r)}{(KM)^{-1}\sqrt{km(km+r)}}
\\
\times\frac{1}{T}
\wh W_T\!\left(\frac{\log(1+r/km)}{2\pi}\right).
\end{multline*}
We utilise $\omega^2$ here, rather than $\omega$, as it will be useful shortly to keep a smoothing in the sum as well as in the weight function.  
Note that from the support of $\wh W$ we have the restrictions
\[
R\ll \frac{KM}{T},\qquad KM\ll L;
\]
the second following from the first and the fact that $KM+R\ll L$. 


We now separate variables. Write
\[
KM(km(km+r))^{-1/2}=\frac{KM}{km}\Big(1+\frac{r}{km}\Big)^{-1/2}
\]
and for the $G_0(\log(km+r)/\log L)$ term we expand the polynomial $G_0$, write 
\[
\log(km+r)=\log k+\log m+\log\Big(1+\frac{r}{km}\Big)
\]
 in each monomial, 
and then expand the powers of this by the multinomial theorem. In this way we find that  
\eqref{OD term} can be written as a sum of $O_G(1)$ terms of the shape
\[
\frac{T}{KM}\sum_{k,m, r}a_k \Lambda(k)b_m\chi(m)\chi(km+r)\omega(k/K)\omega(m/M)\omega(r/R)\mc{F}(k,m,r)
\]
where $a_k$ is $g_h(k)$ times $K/k$ and some power of $\log k$, $b_m$ is $G(m)$ times $M/m$ and some power of $\log m$, and 
\[
\mc{F}(k,m,r)=\omega(k/K)\omega(m/M)\omega(r/R)\frac{f(km+r)\log^j(1+r/km)}{(1+r/km)^{1/2}}\cdot \frac1T\wh W_T\!\left(\frac{\log(1+r/km)}{2\pi}\right)
\]
for some $j\geqs 0$. Note that, thus far, $|a_k|,|b_m|\ll q^\eps$. 

We now separate variables in $\mc{F}$ via an inverse Mellin transform. 
This gives 
\[
\mc{F}(k,m,r)=\frac{1}{(2\pi i )^3}\int_{(c)^3} \widetilde{\mc{F}_0}(s_1,s_2,s_3)
\Big(\frac{K}{k}\Big)^{s_1}
\Big(\frac{M}{m}\Big)^{s_2}
\Big(\frac{R}{r}\Big)^{s_3}
ds_1ds_2ds_3
\] 
with $c>0$ where $\int_{(c)}=\int_{c-i\infty}^{c+i\infty}$ and 
\begin{multline*}
\widetilde{\mc{F}_0}(s_1,s_2,s_3)
=
\int_{\mb{R}_+^3}\omega(x)\omega(y)\omega(z)
\frac{f(KMxy+Rz)\log^j(1+\tfrac{Rz}{KMxy})}{(1+\tfrac{Rz}{KMxy})^{1/2}} 
\\
\times 
\frac1T\wh W_T\bigg(\frac{\log(1+\tfrac{Rz}{KMxy})}{2\pi}\bigg)
x^{s_1-1}y^{s_2-1}z^{s_3-1}dxdydz. 
\end{multline*}
From the compact support of $\omega$ we see $\widetilde{\mc{F}_0}$ is an entire function. Recalling that $R/KM\ll T^{-1}$ and that $f^{(j)}(x)\ll 1/L^j$, integration by parts shows that 
\[
\widetilde{\mc{F}_0}(s_1,s_2,s_3)
\ll_{j_1,j_2,j_3}
(1+|t_1|)^{-j_1}(1+|t_2|)^{-j_2}(1+|t_3|)^{-j_3}.
\]
Hence $\mc{F}$ is given by an absolutely convergent integral. We then truncate the $s_j$ integrals at height $q^\eps$. This gives a negligible error $O(T^{-C})$ by the rapid decay. The purpose of this move is so that the resultant weights after interchanging summation and integration have derivatives bounded by $q^\eps$ -- e.g. differentiating $(K/x)^{s_1}$ with respect to $x$ gives a factor of $-s_1$ which is then $\ll q^\eps$.    

Choosing $c=\eps$ and pushing the sum through the integral we acquire, after relabelling $a_k,b_m$ and renormalising coefficients, an inner sum of the form
\[
\frac{q^\eps T}{KM}\sum_{k,m, r}a_k \Lambda(k)b_mc_r\chi(m)\chi(km+r)
\]
where $a,b,c$ are complex valued 1-bounded smooth functions supported on $[K,2K]$, $[M,2M]$, $[R,2R]$, respectively, whose $j$th order derivatives are $\ll q^\eps$. Strictly speaking, the $q^\eps$ out front should be a normalisation factor $\|a,b,c\|_\infty$, which happens to satisfy the bound $\ll q^\eps$, however in the interests of brevity we write things in this simpler way. We finally remark that on writing $\log k=\log K+\log(k/K)$ we may express $a_k=V(k/K)$ for some smooth function $V(x)$ supported on $[1,2]$ satisfying the bounds $\|V^{(j)}\|_\infty\ll q^\eps$. This form will be preferable at later points.

We are thus required to show that the sum 
\[
E(K,M,R):=\sum_{k}a_k\Lambda(k)\sum_{m, r}b_mc_r\chi(m)\chi(km+r)
\]
has a fixed power saving, $q^{\eta}$ say, over $KM$. From this it will follow, after integrating, and then summing the $\ll (\log T)^{O(1)}$ partitions,  that
\[
\mc{OD}\ll \frac{q^{\eps}T}{q^\eta}=o\Big(T\frac{\phi(q)}{q}\log T\Big)
\]
on taking $\varepsilon$ sufficiently small in terms of $\eta$, thus giving Proposition \ref{off diag prop}.

Let us move on to proving this. 
We recall our parameter choices and a useful consequence: for a fixed $\delta>0$,
 \[
 T=q^{7/3+\delta},\qquad
 L=Tq^{1/2-\delta}=q^{17/6}, \qquad
 R\ll q^{1/2-\delta}
\]
since $R\ll KM/T\ll L/T$. 
Set $\eta=\delta/10$. The three ranges of $K$ we consider are given as follows: 
\begin{align}
 \text{I: }&K\leqs Tq^{-1/2-\eta},
 \label{eq:range-I}\\
 \text{II: }&Tq^{-1/2-\eta}<K\leqs Tq^{-\eta},
 \label{eq:range-II}\\
 \text{III: }&K\geqs q^{7/3+2\eta}.
 \label{eq:range-III}
\end{align}
Note ranges II and III overlap because
$7/3+2\eta<7/3+\delta-\eta$.

\subsection{Range I: $K\leqs Tq^{-1/2-\eta}$} Applying Lemma \ref{lem:completion} to the sum over $m$, \eqref{eq:gcd-average} for the sum over $r$ and estimating the sum over $k$ trivially we have 
\begin{align*}
\frac{E(K,M,R)}{KM}
\ll &
 \frac{1}{KM}\cdot q^\eps KR\bigg(\frac{M}{q}+\sqrt{q}\bigg) 
 \\
 \ll & q^\eps\bigg(\frac{R}{q}+\frac{K\sqrt{q}}{T}\bigg)
\end{align*}
using $R\ll KM/T$ for the second term. Then since $R\ll L/T= q^{1/2-\delta}$ always and $K\ll Tq^{-1/2-\eta}$ in this range, we acquire 
\begin{equation}\label{I}
\frac{E(K,M,R)}{KM}
\ll
 q^{-\eta/2}
 \end{equation} 
 in total for this range of $K$.  

\subsection{Range II: $Tq^{-1/2-\eta}<K\leqs Tq^{-\eta}$} Since $T=q^{7/3+\delta}$  we have $K\gg q$ in this range.  Also, 
$R/M\ll K/T\leqs q^{-\eta}$ and so we have $R\leqs M$ for sufficiently large $q$.  

Now,
$\chi(m)=0$ unless $(m,q)=1$ and hence in the sum over $m$ we may write
\begin{equation*}
 \chi(m)\chi(km+r)=\chi(k+r\bar m)
\end{equation*}
since $\chi(m)^2=1$ where $\bar m$ is the multiplicative inverse of $m\bmod q$. Then on swapping summation orders and retaining the condition $(m,q)=1$ in $b_m$, we have 
\begin{align*}
E(K,M,R)
= &
\sum_{m,r}b_mc_r\sum_k a_k\Lambda(k)\chi(k+r\bar m)
\\
= &
\sum_{x\bmod q}\sum_{\substack{m,r\\ r\bar m\equiv x\bmod q}}b_mc_r
\sum_k a_k\Lambda(k)\chi(k+x)
\end{align*}
after grouping terms according to $r\bar m$. Then by Cauchy--Schwarz, this is 
\begin{equation}\label{CS}
\ll 
\bigg(\sum_{x\bmod q}\bigg|\sum_{\substack{m,r\\ r\bar m\equiv x\bmod q}}b_mc_r\bigg|^2\bigg)^{1/2}
\bigg(\sum_{x\bmod q}\bigg|\sum_{k}a_k\Lambda(k)\chi(k+x)\bigg|^2\bigg)^{1/2}.
\end{equation}
For these sums we have the following two lemmas. 

\begin{lem}
Let $1\leqs R\leqs M\leqs q^C$.  For arbitrary coefficients
$|b_{r,m}|\leqs1$, supported on $r\asymp R$, $m\asymp M$ and $(m,q)=1$, put
\[
 W_x=\sum_{\substack{r,m\\r\bar m\equiv x\bmod q}}b_{r,m}.
\]
Then
\begin{equation}
 \sum_{x\bmod q}|W_x|^2
 \ll
 q^\eps\bigg(RM+\frac{R^2M^2}{q}\bigg).
 \label{eq:energy}
\end{equation}
\end{lem}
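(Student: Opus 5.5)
Expanding the square, $\sum_x|W_x|^2$ is bounded by the number of quadruples $(r_1,m_1,r_2,m_2)$ with $r_i\asymp R$, $m_i\asymp M$, $(m_i,q)=1$ and $r_1\bar m_1\equiv r_2\bar m_2 \bmod q$. Indeed
\[
\sum_{x\bmod q}|W_x|^2=\sum_{\substack{r_1,m_1,r_2,m_2\\ r_1\bar m_1\equiv r_2\bar m_2 \bmod q}} b_{r_1,m_1}\ol{b_{r_2,m_2}},
\]
and we use $|b|\leqs 1$. Since the $m_i$ are coprime to $q$, the congruence is equivalent to
\[
r_1m_2\equiv r_2m_1 \pmod q .
\]
So the plan is to count solutions of $r_1m_2-r_2m_1=\ell q$ with $\ell\in\mb{Z}$ and $|\ell|\ll RM/q+1$.

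First I handle the term $\ell=0$. Here $r_1m_2=r_2m_1$. Fix the pair $(r_1,m_2)$; there are $\ll RM$ choices. The product $N=r_1m_2\ll q^{2C}$ then determines $r_2$ as a divisor of $N$, and $m_1=N/r_2$. The divisor bound gives $\ll q^\eps$ choices, so the diagonal contributes $\ll q^\eps RM$.

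Next I handle $\ell\neq 0$. Fix $(r_1,r_2,\ell)$; there are $\ll R^2\cdot RM/q$ choices. The equation $r_1m_2-r_2m_1=\ell q$ is then linear in $(m_1,m_2)$ with $m_i\asymp M$. Let $d=(r_1,r_2)$; solvability requires $d\mid \ell q$. The solutions form a single progression, $m_1$ running over a residue class modulo $r_1/d$, so the number of solutions is $\ll 1+Md/R$.

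This step is the main obstacle. Summing naively gives $R^3M/q+R^2M^2\cdot(\text{gcd factor})/q$. The term $R^3M/q$ is not dominated by the claimed bound when $R$ is large relative to $M$, so the counting needs to be arranged better. I would instead fix $(r_1,m_1)$ and $\ell$, which is $\ll RM\cdot(RM/q+1)$ choices. Given these, $(r_2,m_2)$ solves $r_1m_2-r_2m_1=\ell q$, so $r_2$ lies in a residue class modulo $r_1/(r_1,m_1)$. That gives $\ll 1+R(r_1,m_1)/r_1$ choices of $r_2$, and $m_2$ is then determined. Averaging over $r_1$, the gcd contributes only $\ll q^\eps$, since $\sum_{r\asymp R}(r,m)/r\ll q^\eps$. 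The total is then $\ll q^\eps RM(RM/q+1)$.

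For the $\ell=0$ case in this second parametrisation, fixing $(r_1,m_1)$ gives $r_2/m_2=r_1/m_1$. That has $\ll 1+R(r_1,m_1)/r_1$ solutions, which after averaging is again $q^\eps RM$, consistent with the first count. Combining the two ranges of $\ell$ gives \eqref{eq:energy}. The condition $R\leqs M$ is only used to ensure $R/r_1\asymp 1$, so no extra loss appears; I would check that carefully, together with the gcd averaging, as the delicate point.
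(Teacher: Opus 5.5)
Your final argument is correct, but it arranges the count differently from the paper. The paper fixes $(r_1,r_2,h)$ with $h=\ell q$, groups by $g=(r_1,r_2)$ (which must divide $h$), and uses Bezout to write the solutions as $m_i=m_{i,0}h/g+nr_i$ with $n\ll 1+gM/R$. The hypothesis $R\leqs M$ is exactly what absorbs the ``$+1$'', giving $\sum_h\sum_{g\mid h}RM/g\ll q^\eps RM(1+RM/q)$.

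That is your first parametrisation, and your reason for abandoning it does not apply here. Under $R\leqs M$ one has $R^3M/q\leqs R^2M^2/q$. Together with $\sum_{r_1,r_2\asymp R}(r_1,r_2)\ll R^2\log R$, that route already closes.

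Your second parametrisation is also valid, and it is slightly stronger. You fix $(r_1,m_1,\ell)$, so $r_2$ lies in one class modulo $r_1/(r_1,m_1)$, $m_2$ is then determined, and you use $\sum_{r\asymp R}(r,m)\ll R\tau(m)$. This never uses $R\leqs M$. Your closing remark that $R\leqs M$ is needed to get $R/r_1\asymp 1$ is mistaken: that follows from $r_1\asymp R$ alone. Indeed the count of $r_1m_2\equiv r_2m_1 \pmod q$ is symmetric in the roles of $r$ and $m$.

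What each route buys: the paper's version gives an explicit lattice description of the solutions but leans on the size condition. Yours is a one-line residue-class count with a gcd average and no size hypothesis. Your separate divisor-bound treatment of $\ell=0$ is correct but redundant, since the second count covers $\ell=0$ as well.
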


\begin{proof}
This is essentially an additive divisor problem. On discarding the coefficients, the left side counts solutions of
\begin{equation*}
 r_1m_2-r_2m_1=\ell q
\end{equation*}
where $\ell$ varies over $\mb{Z}$ as the support conditions allow.
Writing $\ell q =h$ gives $O(1+RM/q)$ possible values of $h$. We suppose $h\neq 0$; the case $h=0$ can be handled similarly to the following. Fixing $r_1,r_2,h$ in an outer sum, we see that there are no solutions to $ r_1m_2-r_2m_1=h$ if the gcd $g=(r_1,r_2)$ does not divide $h$. Organising the sum in terms of $g$ gives
\[
\sum_h\sum_{g|h}\sum_{\substack{r_j\asymp R/g\\ (r_1,r_2)=1}}\sum_{\substack{m_j\asymp M\\r_1m_2-r_2m_1=h/g}}1
\]

By Bezout's Theorem, there exists a solution to the equation $r_1m_2-r_2m_1=1$, given by $m_1=m_{1,0}$, $m_2=m_{2,0}$, say. The general solutions to $r_1m_2-r_2m_1=h/g$ then take the parametrised form 
\[
m_1=m_{1,0}(h/g)+nr_1,\qquad m_2=m_{2,0}(h/g)+nr_2
\] 
with varying $n\in\mb{Z}$. The support conditions then dictate $n\ll gM/R$ giving in total
\[
\ll \sum_h\sum_{g|h}\sum_{\substack{r_j\asymp R/g\\ (r_1,r_2)=1}}gM/R
\ll RM \sum_h\sum_{g|h}\frac{1}{g}
\ll q^\eps RM\Big(1+\frac{RM}{q}\Big).
\]
\end{proof}

\begin{lem}
\label{lem:translate-L2}
For arbitrary $b_k$ supported on $k\asymp K$,
\begin{equation}
 \sum_{x\bmod q}
 \left|\sum_{k\asymp K}b_k\chi(k+x)\right|^2
 \ll_\eps q^\eps(q+K)\sum_{k\asymp K}|b_k|^2.
 \label{eq:translate-L2}
\end{equation}
\end{lem}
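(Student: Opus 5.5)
Expanding the square and summing over $x$ first gives
\[
 \sum_{x\bmod q}\left|\sum_{k}b_k\chi(k+x)\right|^2
 =\sum_{k_1,k_2\asymp K}b_{k_1}\ol{b_{k_2}}\,\mc{C}(k_1-k_2),
 \qquad
 \mc{C}(d)=\sum_{x\bmod q}\chi(x)\chi(x+d),
\]
after the shift $x\mapsto x-k_1$, using that $\chi$ is real. The inner sum is a complete correlation sum of exactly the type computed in the proof of Lemma \ref{lem:completion}, namely $S_q(1,d;0)$. From \eqref{eq:complete-hybrid}, or directly from the local computation there, we have $|\mc{C}(d)|\ll q^\eps (d,Q)$. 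We also have the trivial bound $|\mc{C}(d)|\leqs q$, attained when $d\equiv 0 \bmod q$.

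Next I bound the double sum by $\tfrac12(|b_{k_1}|^2+|b_{k_2}|^2)$, which gives
\[
 \ll \sum_{k_1}|b_{k_1}|^2\sum_{k_2\asymp K}|\mc{C}(k_1-k_2)|.
\]
For fixed $k_1$, split the sum over $k_2$ according to whether $k_2\equiv k_1 \bmod q$.

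\emph{Terms with $k_2\equiv k_1 \bmod q$.} There are $O(1+K/q)$ of them, each contributing at most $q$, so these give $O(q+K)$.

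\emph{Remaining terms.} Here $d=k_1-k_2$ ranges over nonzero integers with $|d|\ll K$, and we bound $|\mc{C}(d)|\ll q^\eps (d,Q)$. Applying the gcd average \eqref{eq:gcd-average} with $\alpha=1$ gives $\ll q^{\eps}K$. Strictly, \eqref{eq:gcd-average} is stated over $|r|\leqs R$ with $q^\eps$ loss; it follows by writing $(d,Q)\leqs\sum_{e\mid (d,Q)}e$ and summing $\sum_{e\mid Q} e\cdot(K/e+1)\ll q^\eps(K+q)$. In any case the contribution is $\ll q^\eps(q+K)$.

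Combining the two cases gives \eqref{eq:translate-L2}.

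The only delicate point is the complete sum $\mc{C}(d)$ when $d$ shares factors with $q$ but $d\not\equiv0$. For odd primes $p\mid Q$ the local factor is $-\chi_p(1)=-1$ if $p\nmid d$, and $p-1$ if $p\mid d$. At $2^\nu$ the factor is bounded by $2^\nu\ll 1$. Multiplying these via CRT gives $|\mc{C}(d)|\leqs 8(d,Q)$, which confirms the bound used above without any Weil-type input. So I expect no real obstacle; the main care is the bookkeeping of the divisor sum when $K<q$, where the $q$ term dominates anyway.
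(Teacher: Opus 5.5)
Your proof is correct and is essentially the paper's argument: expand the square, bound the complete correlation $\sum_{x}\chi(x)\chi(x+d)$ by $O((d,Q))$ via the CRT local computation, and then sum the gcds. The only difference is the last step. You use the Schur test ($|b_{k_1}b_{k_2}|\leqs\frac12(|b_{k_1}|^2+|b_{k_2}|^2)$ together with row sums of $(d,Q)$). The paper instead writes $(k-\ell,q)=\sum_{e\mid(q,k-\ell)}\phi(e)$ and applies Cauchy--Schwarz within residue classes modulo $e$. Both routes give $q^\eps(q+K)\sum_k|b_k|^2$.
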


\begin{proof}
Expanding the square and swapping summations gives an inner sum satisfying 
\[
 \left|\sum_{x\bmod q}\chi(x+k)\chi(x+\ell)\right|
 \ll (k-\ell,q).
\]
This follows by the similar arguments to Lemma \ref{lem:completion}: after mapping $x+\ell\mapsto x$, we factor the sum via the Chinese remainder theorem and note that the local factors at $p$ are either of modulus $p-1$ when $p|\ell-k$, or of modulus 1 otherwise since $\chi$ is quadratic.  

Now apply the identity 
$(k-\ell,q)=\sum_{e\mid q,\ e\mid k-\ell}\phi(e)$ to give 
\[
\sum_{e|q}\phi(e)\sum_{\substack{k,\ell\\k\equiv \ell\bmod e}}|b_kb_\ell|
=
\sum_{e|q}\phi(e) \sum_{a\bmod e}\bigg(\sum_{k\equiv a\,(e)}|b_k|\bigg)^2
\]
  For each $e\mid q$, Cauchy's
inequality in residue classes modulo $e$ gives
\[
 \sum_{a\bmod e}\bigg(\sum_{k\equiv a\,(e)}|b_k|\bigg)^2
 \ll (K/e+1)\sum_k|b_k|^2.
\]
Summing over $e\mid q$ and using $\sum_{e|q}\phi(e)=q$, $\sum_{e|q}\phi(e)/e\leqs d(q)$ and the divisor bound proves \eqref{eq:translate-L2}.
\end{proof}

Applying these in the Cauchy--Schwarz bound \eqref{CS} and bounding $\Lambda(k)^2$ trivially gives 
\begin{align*}
\frac{E(K,M,R)}{KM}
\ll &
 \frac{1}{KM}\cdot q^\eps 
 \bigg(RM+\frac{(RM)^2}{q}\bigg)^{1/2} 
  ((q+K)K)^{1/2} 
 \\
 \ll & {q^\eps}\bigg(\sqrt{\frac{R}{M}}+\frac{R}{q^{1/2}}\bigg)
\end{align*}
since $q\ll K$. Then using $R/q^{1/2}\ll q^{-\delta}$, as before, and $R/M\ll K/T\ll q^{-\eta}$ in this range, gives 
\begin{equation}\label{II}
\frac{E(K,M,R)}{KM}
\ll q^{-\eta/2+\eps}+q^{-\delta+\eps}\ll q^{-\eta/4}
\end{equation}
on taking $\eps$ sufficiently small.

\subsection{Range III: $K\geqs q^{7/3+2\eta}$} We start again from the formula
\[
E(K,M,R)
=
\sum_{m,r}b_mc_r\sum_k a_k\Lambda(k)\chi(k+r\bar m).
\]
If $(k,q)>1$ then we must have $k=p^j$ for some prime $p|q$. Since $a_k$ is supported on a dyadic interval, at most one such prime power can appear in the sum for any given $p>2$. Then bounding trivially, the contribution from $(k,q)>1$ is $\ll \omega(q)\log q RM\ll q^\eps RM$ which on normalising by $KM$ is $\ll q^\eps R/K\ll q^{-1}$ in this range since $R\ll L/T=q^{1/2-\delta}$. Since this is negligible, we may assume that $(k,q)=1$ and we retain this condition implicitly in the $a_k$.  

Now, for a Dirichlet character $\psi$ modulo $q$ consider the generalised Jacobi sum 
\[
T_\psi(y)=\sideset{}{^*}\sum_{u\bmod q}\chi(u+y)\ol{\psi(u)}. 
\]
Then by orthogonality
\[
\frac{1}{\phi(q)}\sum_{\psi\bmod q}T_\psi(y)\psi(k)
=
\sideset{}{^*}\sum_{u\bmod q}\chi(u+y)\mathds{1}_{u\equiv k\bmod q}
=
\chi(k+y).
\]
Applying this identity we acquire
\begin{equation}\label{E III}
E(K,M,R)
=
\frac{1}{\phi(q)}\sum_{\psi\bmod q}\bigg(\sum_{m,r}b_mc_rT_\psi(r\bar m)\bigg)\bigg(\sum_k a_k\Lambda(k)\psi(k)\bigg).
\end{equation}
Roughly speaking, $T_\psi(r\bar m)$ is generically $\ll q^{1/2}$ whilst via zero density estimates we have $\sum_{\psi}\sum_k\cdots \ll Kq^\eps$ giving a total contribution $\ll q^{1/2+\eps}KMR/\phi(q)$. On normalising by $KM$ this is $\ll R/q^{1/2-\eps}\ll q^{-\delta/2}$, as required. These bounds are given by the following two Lemmas.  

\begin{lem}
\label{lem:Jacobi}
Suppose $q$ is odd and square-free and decompose
$\psi=\prod_{p\mid q}\psi_p$. Let
\begin{equation}
 s_\psi=\prod_{\substack{p\mid q\\\psi_p=\chi_p}}p,
 \qquad q_\psi=q/s_\psi,
 \qquad \lambda_\psi=(\chi\bar\psi)|_{q_\psi}.
 \label{eq:s-psi}
\end{equation}
Then
\begin{equation}
 T_\psi(r\bar m)=
 J_{q_\psi}(\psi)\lambda_\psi(r)
 \overline{\lambda_\psi(m)}c_{s_\psi}(r),
 \label{eq:Jacobi-exact}
\end{equation}
where $c_s$ is the Ramanujan sum and
\begin{equation}
 |J_{q_\psi}(\psi)|\leqs\sqrt{q_\psi}.
 \label{eq:Jacobi-bound}
\end{equation}
For $\psi=\psi_0$ or $\chi$, we have $|J_{q_\psi}(\psi)|\leqs 1$.  Thus, for coefficients $b_m$, $c_r$ of modulus at
most $1$,
\begin{equation}
\begin{aligned}
 \frac1{\phi(q)}
 \left|\sum_{r\asymp R}\sum_{m\asymp M}
 c_rb_mT_\psi(r\bar m)\right|
 &\ll q^\eps RM
 \begin{cases}
 q^{-1/2},&\psi\notin\{\psi_0,\chi\},\\
 q^{-1},&\psi\in\{\psi_0,\chi\}.
 \end{cases}
\end{aligned}
\label{eq:Jacobi-coefficients}
\end{equation}
The bound \eqref{eq:Jacobi-coefficients} also holds for fundamental discriminant conductors with a factor $4$ or $8$.
\end{lem}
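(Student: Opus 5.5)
The plan is to compute $T_\psi(y)$ exactly by the Chinese Remainder Theorem, prime by prime, and then sum the resulting closed form over $r$ and $m$. Write $y=r\bar m$ with $(m,q)=1$, and $\chi=\prod_{p\mid q}\chi_p$, $\psi=\prod_{p\mid q}\psi_p$. Since $u$ runs over reduced residues, we have
\[
T_\psi(y)=\prod_{p\mid q}T_p(y),\qquad T_p(y)=\sum_{\substack{u\bmod p\\ p\nmid u}}\chi_p(u+y)\overline{\psi_p(u)}.
\]
We then evaluate each local factor $T_p(y)$ by distinguishing whether $\psi_p=\chi_p$ and whether $p\mid y$ (equivalently $p\mid r$).

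\emph{Case $\psi_p=\chi_p$.} If $p\mid y$ we get $\sum_{p\nmid u}\chi_p(u)^2=p-1$. If $p\nmid y$, the substitution $u=yv$ gives $\chi_p(y)^2\sum_{v\ne0}\chi_p(v+1)\chi_p(v)$. Using $\chi_p(v)=\chi_p(\bar v)$ and completing the sum as in the proof of Lemma~\ref{lem:completion}, this equals $-1$. So in both cases the factor is exactly the Ramanujan sum $c_p(r)$. Multiplying over $p\mid s_\psi$ gives $c_{s_\psi}(r)$.

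\emph{Case $\psi_p\ne\chi_p$, $p\mid y$.} The factor is $\sum_{p\nmid u}(\chi_p\overline{\psi_p})(u)=0$, because $\chi_p\bar\psi_p$ is nontrivial. This matches $\lambda_\psi(r)=0$ in \eqref{eq:Jacobi-exact}.

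\emph{Case $\psi_p\ne\chi_p$, $p\nmid y$.} Substituting $u=yv$ gives
\[
(\chi_p\bar\psi_p)(y)\,J_p,\qquad J_p:=\sum_{v\ne0}\chi_p(v+1)\overline{\psi_p(v)}.
\]
Since $(\chi_p\bar\psi_p)(r\bar m)=(\chi_p\bar\psi_p)(r)\,\overline{(\chi_p\bar\psi_p)(m)}$, taking the product over $p\mid q_\psi$ yields \eqref{eq:Jacobi-exact} with $J_{q_\psi}(\psi)=\prod_{p\mid q_\psi}J_p$.

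For the bound \eqref{eq:Jacobi-bound}, consider first $\psi_p$ trivial: then $J_p=\sum_{v\ne0}\chi_p(v+1)=-1$. Otherwise $\chi_p$, $\bar\psi_p$ and $\chi_p\bar\psi_p$ are all nontrivial, so $J_p$ is a genuine Jacobi sum (up to the sign change $v\mapsto -v$), and $|J_p|=\sqrt p$. This gives $|J_{q_\psi}(\psi)|\leqs\sqrt{q_\psi}$. In the two special cases the product is trivially of modulus at most $1$:
\begin{itemize}
\item for $\psi=\psi_0$, every local factor is $-1$;
\item for $\psi=\chi$, we have $q_\psi=1$.
\end{itemize}

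For \eqref{eq:Jacobi-coefficients}, insert the exact formula and bound the terms with absolute values. Using $|\lambda_\psi|\leqs1$ and $|c_s(r)|\leqs(r,s)\leqs(r,q)$, we get
\[
\frac1{\phi(q)}\Big|\sum_{r,m}c_rb_mT_\psi(r\bar m)\Big|\leqs\frac{|J_{q_\psi}(\psi)|\,M}{\phi(q)}\sum_{r\asymp R}(r,q).
\]
By \eqref{eq:gcd-average} the sum over $r$ is $\ll q^\eps R$. Combined with $\phi(q)\gg q^{1-\eps}$, this gives:
\begin{itemize}
\item $q^{-1/2+\eps}RM$ in general, since $|J_{q_\psi}(\psi)|\leqs\sqrt q$;
\item $q^{-1+\eps}RM$ for $\psi\in\{\psi_0,\chi\}$. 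For $\psi_0$ this holds since then $s_\psi=1$ and $|J|\leqs 1$; for $\psi=\chi$ it holds since $q_\psi=1$ and $|c_q(r)|\leqs(r,q)$.
\end{itemize}

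If $q=2^\nu Q$ with $\nu\in\{2,3\}$, the $2$-part contributes one extra local factor modulo $4$ or $8$. We do not evaluate it exactly; its modulus is trivially at most $8$, so it only changes constants in \eqref{eq:Jacobi-coefficients}.

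The main thing to get right is the bookkeeping in the local evaluation: the factors $\chi_p(y)^2$ must collapse correctly, and the degenerate cases ($\psi_p$ trivial, or $p\mid r$) must match $c_{s_\psi}$ and $\lambda_\psi$ exactly. Once the local factors are correct, the remaining estimates are routine.
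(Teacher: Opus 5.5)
Your proposal is correct and follows the paper's proof essentially step for step: you factor $T_\psi$ into local sums by the Chinese Remainder Theorem, rescale $u$ by $r\bar m$ at primes $p\nmid r$, identify the factors with $\psi_p=\chi_p$ as $c_p(r)$ and the remaining ones as Jacobi sums of modulus at most $\sqrt p$ (vanishing when $p\mid r$), and finish with the trivial bound together with $\sum_{r}(r,q)\ll q^{\eps}R$. Your explicit treatment of a principal local component $\psi_p$, where the factor is exactly $-1$, is a small but useful addition: the paper folds this case into ``at most $\sqrt p$'', and it is precisely what gives $|J_{q_\psi}(\psi_0)|=1$.
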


\begin{proof}The proof shares similarities with Lemma \ref{lem:completion}, and so we may omit some details at times. Recall that 
\[
T_\psi(r\bar m)=\sideset{}{^*}\sum_{u\bmod q}\chi(u+r\bar m)\ol{\psi(u)}. 
\]
By the Chinese Remainder Theorem everything here factors prime by prime.  If $p\nmid r$, we scale $u$ by
$r\bar m$ so that the local sum is a Jacobi sum 
\[
\sideset{}{^*}\sum_{u\bmod p}\chi_p(u+1)\ol{\psi_p(u)}
\]
times
$(\chi_p\bar\psi_p)(r\bar m)$.  If $\psi_p\ne\chi_p$, the Jacobi factor has
absolute value at most $\sqrt p$ by the well-known beta function analogy: for characters $\chi_1,\chi_2$ modulo $m$,
\[
J(\chi_1,\chi_2):=\sum_{u\bmod m} \chi_1(u)\chi_2(1-u)=\frac{\mc{G}(\chi_1)\mc{G}(\chi_2)}{\mc{G}(\chi_1\chi_2)}
\]
where $\mc{G}(\chi_j)=\sum_{a=1}^m\chi_j(a)e(a/m)$ is the Gauss sum. 

If $\psi_p=\chi_p$, the local factor is
$-1$ when $p\nmid r$ and $p-1$ when $p\mid r$, which is exactly $c_p(r)$.
If $p\mid r$ but $\psi_p\ne\chi_p$, both sides of
\eqref{eq:Jacobi-exact} vanish.  This proves \eqref{eq:Jacobi-exact} and
\eqref{eq:Jacobi-bound}.

Now, when summing over $m,r$ we use 
\begin{equation*}
 \sum_{0<|r|\leqs R}|c_s(r)|\leqs
 \sum_{0<|r|\leqs R}(r,s)\ll Rq^\eps.
\end{equation*}
After division by $\phi(q)=q^{1-o(1)}$, the generic bound follows from
$\sqrt{q_\psi}\leqs\sqrt q$.  For $\psi_0$ and $\chi$, all remaining local
Jacobi factors have magnitude $1$, giving the additional square-root saving
in \eqref{eq:Jacobi-coefficients}.  

If $q$ contains a factor of  $2^\nu$, we acquire an additional factor of 
\[
\sum_{u\bmod 2^\nu}\chi_{2^\nu}(u+r\bar m)\ol{\psi_{2^\nu}(u)}
\]
in the factorisation of $T_\psi(r\bar m)$. Obviously, this only changes the bound up to a constant. 
\end{proof}

\begin{lem}
\label{lem:prime-L1}
Let $V$ be a
smooth function supported in $[1,2]$ and suppose that
$\|V^{(j)}\|_\infty\ll_j q^{\eps}$.  If
\begin{equation*}
 K\geqs q^{7/3+2\eta},
\end{equation*}
then
\begin{equation*}
 \sum_{\substack{\psi\bmod q\\\psi\notin\{\psi_0,\chi\}}}
 \left|\sum_n\Lambda(n)\psi(n)V(n/K)\right|
 \ll_{\eta,\eps}Kq^\eps
\end{equation*}
where $\psi_0$ is the principal character. 

\end{lem}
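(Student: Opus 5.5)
We will use the explicit formula for each $\psi$ and then count zeros with Proposition \ref{prop:density}. For each character $\psi\bmod q$ that is not principal, let $\psi^*$ be the primitive character inducing it, with conductor $q^*\mid q$. Replacing $\psi$ by $\psi^*$ changes the inner sum only at prime powers $p^j$ with $p\mid q$. There are $O(\log q)$ of these with $p^j\asymp K$, so the change is $O(q^\eps)$ per character. Summed over the $\phi(q)$ characters this costs $O(q^{1+\eps})$, which is $\ll Kq^\eps$. Note that $\chi$ is excluded from the sum. This matters because $L(s,\chi)$ has the Siegel zero $\beta$, which would otherwise contribute a term of size $K^\beta\approx K$ with no saving.

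Next, write $\sum_n\Lambda(n)\psi^*(n)V(n/K)$ via Mellin inversion as $\frac{1}{2\pi i}\int_{(2)}-\frac{L'}{L}(s,\psi^*)\wt V(s)K^s\,ds$. Here $\wt V$ is the Mellin transform of $V$, and $\wt V(s)\ll_A q^\eps(1+|t|)^{-A}$ because $\|V^{(j)}\|_\infty\ll q^\eps$. Shift the contour to $\Re s=-1/2$. Since $\psi^*$ is nonprincipal there is no pole at $s=1$. This gives
\[
-\sum_{\rho}\wt V(\rho)K^{\rho}+O(q^{\eps}K^{-1/2}),
\]
where the sum runs over the nontrivial zeros $\rho=\beta_\rho+i\gamma_\rho$ of $L(s,\psi^*)$. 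The possible trivial zero at $s=0$ is harmless after the shift. By the rapid decay of $\wt V$, zeros with $|\gamma_\rho|>H:=q^{\eps}$ contribute $O(q^{-C})$ in total, using the standard bound $O(\log(q(|t|+2)))$ for the number of zeros in a unit window. So for each $\psi\ne\psi_0,\chi$ we get
\[
\Big|\sum_n\Lambda(n)\psi(n)V(n/K)\Big|\ll q^\eps\sum_{\substack{\rho:\,|\gamma_\rho|\leqs H}}K^{\beta_\rho}+q^{1+\eps}/\phi(q)+q^{-C}.
\]

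Now sum over $\psi$. Each primitive character of conductor $q^*\mid q$ induces exactly one character mod $q$, and there are only $d(q)\ll q^\eps$ conductors. We therefore apply Proposition \ref{prop:density} for each modulus $q^*\mid q$, with $q^*\leqs q$. Write $K^{\beta}=K^{1/2}+\int_{1/2}^{\beta}K^\sigma\log K\,d\sigma$. This turns the total into
\[
K^{1/2}\sum_{\psi}N(\tfrac12,H,\psi)+\log K\int_{1/2}^{1}K^{\sigma}\sum_{\psi}N(\sigma,H,\psi)\,d\sigma
\ll q^{\eps}\Big(K^{1/2}q^{7/6}+\log K\int_{1/2}^1K^\sigma q^{7(1-\sigma)/3}\,d\sigma\Big).
\]
The integrand equals $K\,(q^{7/3}/K)^{1-\sigma}$. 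Since $K\geqs q^{7/3+2\eta}$, this is at most $K$ throughout, so the integral is $\ll K\log K/\log(K/q^{7/3})\ll_\eta K$. The first term is $K^{1/2}q^{7/6}\leqs K$ by the same hypothesis. Adding the negligible errors gives $\ll_{\eta,\eps}Kq^{\eps}$.

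The main obstacle I expect is the region near $\sigma=1$. The density estimate alone does not rule out zeros very close to $1$, and such zeros give terms $K^\beta$ close to $K$. This is acceptable here because we only claim the bound $Kq^\eps$, with no saving; the density estimate caps the number of such zeros at $q^{\eps}$. The other point to check is uniformity when the conductor $q^*$ is small. The bound $(q^*H)^{7(1-\sigma)/3+\eps}$ is monotone in $q^*$, so no issue arises. Finally, the $q^\eps$ factors from $H$ and from $\wt V$ must be taken as small powers, so that they stay absorbed into the $q^{2\eta}$ margin in the hypothesis $K\geqs q^{7/3+2\eta}$.
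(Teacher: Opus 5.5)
Your proposal is correct and follows essentially the same route as the paper. Both arguments reduce to primitive characters, apply Mellin inversion with a contour shift to $\Re s=-1/2$, discard zeros above height $H$ using the decay of $\widetilde V$, and combine the identity $K^\beta=K^{1/2}+\log K\int_{1/2}^\beta K^\sigma\,d\sigma$ with the Chen--Gupta--Li density bound summed over conductors $d\mid q$.

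The only slips are cosmetic. Zeros with $\beta\leqs 1/2$ are not counted by $N(\tfrac12,H,\psi)$, so they need the separate trivial bound $K^{1/2}qH\log(qH)\ll K$, as the paper does. The tail over $|\gamma|>H$ is $O(Kq^{-C})$ rather than $O(q^{-C})$. Finally, excluding $\chi$ is not actually needed for a bound of size $Kq^{\eps}$, since a Siegel zero contributes at most $K^\beta\leqs K$ and your own argument already absorbs that.
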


\begin{proof}
We first replace the sum by that involving the (primitive) inducing characters, so as to apply zero-density results later. Let $\psi^*$ be the primitive character inducing $\psi$, and let
$d_\psi$ be its conductor.  If $\psi\ne\psi_0$, then $\psi^*$ is nonprincipal also. 
The difference between the two sums is 
\begin{align*}
 &\sum_n\Lambda(n)\psi(n)V(n/K)
 -\sum_n\Lambda(n)\psi^*(n)V(n/K)\\
 &\qquad=-\sum_{\substack{p\mid q\\p\nmid d_\psi}}
 \sum_{\substack{r\geqs1\\K\leqs p^r\leqs 2K}}
 (\log p)\psi^*(p^r)V(p^r/K).
\end{align*}
For a fixed prime $p$, at most two powers $p^r$ lie in $[K,2K]$ and consequently
\[
 \left|\sum_n\Lambda(n)
       (\psi(n)-\psi^*(n))V(n/K)\right|
 \ll \|V\|_\infty\sum_{p\mid q}\log p
 \ll q^{\eps}.
\]
Summing over the $\psi$ modulo $q$, the total difference is
\begin{equation*}
 O\bigl(q^{1+\eps}\bigr)=o(K).
\end{equation*}
Thus, it suffices to consider the sum
\[
 \sum_{\substack{\psi\bmod q\\\psi\notin\{\psi_0,\chi\}}}
\bigg|\sum_n\Lambda(n)\psi^*(n)V(n/K)\bigg|.
\]

We now compute the inner sum via the explicit formula. Write
\[
 \widetilde V(s)=\int_0^\infty V(x)x^{s-1}\,dx,
 \qquad
 \mathcal V_j=1+\max_{0\leqs \ell\leqs j}
                    \|V^{(\ell)}\|_\infty.
\]
From the compact support of $V$, integration by parts gives the rapid decay bounds
\begin{equation}
 |\widetilde V(\sigma+it)|
 \ll_j \mathcal V_j(1+|t|)^{-j}
 \qquad (-1\leqs\sigma\leqs2).
 \label{eq:Mellin-decay}
\end{equation}
Now, by Mellin inversion we have 
\begin{equation}
 \sum_n\Lambda(n)\psi^*(n)V(n/K)
 =\frac{1}{2\pi i}\int_{(2)}
   -\frac{L'}{L}(s,\psi^*)\widetilde V(s)\,K^sds.
 \label{eq:prime-Mellin-inversion}
\end{equation}
Shifting the contour to $\Re s=-1/2$ we encounter poles at the nontrivial zeros,
and possibly trivial zeros including the zero
at $s=0$ that occurs for an even nonprincipal character.  By
\eqref{eq:Mellin-decay}, the trivial-zero residues and the new vertical
integral contribute
\[
 O_B\bigl(\mathcal V_{B+2}\log(2qK)\bigr).
\]
Thus,
\begin{equation}
 \frac1K\left|\sum_n\Lambda(n)\psi^*(n)V(n/K)\right|
 \ll_B q^\eps
 \bigg(\sum_{\rho_{\psi^*}}
 K^{\beta-1}(1+|\gamma|)^{-B}
 +\frac{\log(2qK)}K\bigg).
 \label{eq:prime-explicit}
\end{equation}

We first estimate the tails of the zero sums and for this we need an estimate for the number of zeros. If $F$ is any
nonnegative function of a primitive character, then
\begin{equation*}
 \sum_{\psi\bmod q}F(\psi^*)
 =\sum_{d\mid q}\ \sideset{}{^*}\sum_{\xi\bmod d}F(\xi)
\end{equation*}
where the inner sum is over primitive characters modulo $d$. The
standard individual zero count
\[
 N_\xi(Y):=\#\{\rho_\xi:|\Im\rho_\xi|\leqs Y\}
 \ll Y\log(2dY) \qquad (Y\geqs1)
\]
therefore yields
\begin{equation}
 \mathcal N_q(Y):=
 \sum_{\psi\bmod q}
 \#\{\rho_{\psi^*}:|\gamma|\leqs Y\}
 \ll qY\log(2qY).
 \label{eq:aggregate-zero-count}
\end{equation}

Now, consider the zeros with $|\gamma|>H$ where  
\[
H=q^b,\qquad b>0
\] 
with $b$ to be determined.
Using
\eqref{eq:aggregate-zero-count} and taking $B$ large enough in terms of (any given positive) $b$ we obtain
\begin{align*}
 &\sum_{\psi\bmod q}\sum_{|\gamma|>H}
 K^{\beta-1}(1+|\gamma|)^{-B}
 \ll qH^{1-B}\log(2qH)
 \ll q^{-2}\log(2qH)
\end{align*}
since $\beta<1$. This is $o(1)$, with a fixed power saving.

For the zeros with $\beta<1/2$ we have, from
\eqref{eq:aggregate-zero-count} and the assumption $K\geqs q^{7/3+2\eta}$,
\begin{equation*}\begin{split}
 \sum_{\psi\bmod q}
 \sum_{\substack{|\gamma|\leqs H\\\beta<1/2}}K^{\beta-1}
 &\leqs K^{-1/2}\mathcal N_q(H) \\
 &\ll qHK^{-1/2}\log(2qH) \\
 &\ll q^{-1/6+b-\eta}\log(2qH)=o(1)
 \label{eq:left-zero-range}
\end{split}
\end{equation*}
provided that $b\leqs1/6$.

It remains to consider $\beta\geqs1/2$ and $|\gamma|\leqs H$.  Set
\[
 A(\sigma)=\sum_{\psi\bmod q}N(\sigma,H,\psi^*).
\]
The elementary identity
\[
 K^{\beta-1}=K^{-1/2}
 +\log K\int_{1/2}^{\beta}K^{\sigma-1}\,d\sigma
 \qquad (\beta\geqs1/2)
\]
gives, after summing one zero at a time,
\begin{align}
{\sum_{\psi\bmod q}}
   {\sum_{\substack{\rho_{\psi^*}\\
          \beta\geqs1/2,\ |\gamma|\leqs H}}}K^{\beta-1}
 &=K^{-1/2}A(1/2)
 +\log K\int_{1/2}^{1}K^{\sigma-1}A(\sigma)\,d\sigma.
 \label{eq:layer-cake-identity}
\end{align}
Now, Proposition~\ref{prop:density} gives 
\begin{equation}
 A(\sigma)
 =
 \sum_{d|q}\sideset{}{^*}\sum_{\xi\bmod d}N(\sigma,H,\xi)
 \ll (qH)^{7(1-\sigma)/3+\eps}
 \label{eq:density-in-prime-proof}
\end{equation}
on applying the divisor bound $d(q)\ll q^\eps$. 

The first term on the right of \eqref{eq:layer-cake-identity} is thus 
\[
\ll (q^{1+b})^{7/6+\eps}/q^{7/6+\eta}
\]
which is $o(1)$ provided $b<6\eta/7$, say. Using $K\geqs q^{7/3+2\eta}$ again and $H=q^b$ the integral is 
\[
\ll q^{\eps}\int_{1/2}^1 q^{-(\tfrac73+2\eta)(1-\sigma)+\tfrac73(1+b)(1-\sigma)}d\sigma
\]
which is $\ll q^\eps$, again provided $b<6\eta/7$. Thus, in total we find that 
\begin{align}
 &\frac1K
 \sum_{\substack{\psi\bmod q\\\psi\notin\{\psi_0,\chi\}}}
 \left|\sum_n\Lambda(n)\psi^*(n)V(n/K)\right|
\ll
 q^{\eps}
 +q^{1+\eps }\frac{\log(2qK)}K\ll q^\eps.
 \label{eq:primitive-prime-aggregate}
\end{align}
The result then follows on taking $0<b<\min(\tfrac16,\tfrac{6\eta}{7})$.
\end{proof}

\begin{rem} Note that the restriction on the size of $K$ in this regime is directly related to the exponent $7/3$ in the zero density estimate. If we had exponent $\alpha$, so that the Density Hypothesis gives $\alpha=2$, for example, we would be able to take $K>q^{\alpha+\eta}$. Now, the requirement of having a slight power saving over $T$ in the upper bound of   Range II, coupled with the requirement that Ranges II and III  have non-empty overlap dictates that, up to $\eps$'s and $\eta$'s, $T> q^\alpha$. Therefore, on taking $T\approx q^\alpha$ we see that our arguments allow a resonator of length
\[
L= Tq^{1/2-\delta}\approx T^{1+\tfrac{1}{2\alpha}}.
\]
\end{rem}

Combining Lemmas \ref{lem:Jacobi} and \ref{lem:prime-L1} in \eqref{E III}, and bounding the sum over $k$ trivially when $\psi=\psi_0$ or $\chi$, gives
\begin{equation}\label{III}
\frac{E(K,M,R)}{KM}\ll \frac{1}{KM}\cdot q^\eps\frac{RMK}{q^{1/2}}\ll q^{-\delta/2},
\end{equation}
 as required. Combining the power savings from all three ranges: \eqref{I},\eqref{II} and \eqref{III} gives Proposition \ref{off diag prop}.


\end{document}